\documentclass{article}
\usepackage{amsmath}
\usepackage{amsfonts}
\usepackage{amssymb}
\usepackage{amsthm}
\usepackage{graphicx}
\usepackage{cleveref}
\usepackage{enumerate}
\usepackage{epstopdf}
\usepackage{fullpage}
\usepackage{color}

\newtheorem{Thm}{Theorem}[section]

\newtheorem{Lem}{Lemma}[section]

\newtheorem{Pro}{Proposition}[section]


\theoremstyle{definition}
\newtheorem{Def}[Thm]{Definition}

\theoremstyle{remark}
\newtheorem{Rem}{\bf{Remark}}[section]

\newcommand{\wh}{\widehat}
 
\newcommand{\EqD}{\overset{d}{=}}
\newcommand{\ConvD}{\overset{d}{\rightarrow}}
\newcommand{\ConvFDD}{\overset{f.d.d.}{\longrightarrow}}
\newcommand{\EqFDD}{\overset{f.d.d.}{=}}
\newcommand{\cl}{\mathcal}
\newcommand{\wt}{\widetilde}
\newcommand{\bb}{\mathbb}
\newcommand{\E }{\mathbb{E}}
\newcommand{\mbf}{\boldsymbol}

\newcommand{\B}{\mathrm{B}}
\newcommand{\Var}{\mathrm{Var}}

\numberwithin{equation}{section}

\title{Representations of Hermite processes using local time of intersecting stationary stable regenerative sets}
\author{Shuyang Bai}

\begin{document}

\maketitle

\begin{abstract}
Hermite processes are a class of self-similar processes with stationary increments. They often arise in limit theorems under long-range dependence. We derive new representations of Hermite processes with multiple Wiener-It\^o integrals, whose integrands involve the local time of intersecting stationary stable regenerative sets.  The proof relies on  an approximation of    regenerative sets and local times based on a scheme of random interval covering.
\end{abstract}

\noindent \textbf{Keywords}: 
Hermite process;  multiple Wiener-It\^o integral;  stable regenerative set;  local time;      long-range dependence; random interval covering.\\
\textbf{MCS Classification (2010)}:  60G18.

\section{Introduction}\label{Sec:Intro}
Since the seminal works of Taqqu \cite{taqqu:1975:weak, taqqu:1979:convergence}  and Dobrushin and Major \cite{dobrushin:major:1979:non}, the class of processes called   Hermite processes have attracted considerable interest in probability and statistics.  A Hermite process, up to a multiplicative constant, is specified by two parameters: the \emph{order} $p\in \bb{Z}_+$ and the \emph{memory parameter} 
\begin{equation}\label{eq:beta}
\beta\in \left(1-\frac{1}{p},1\right).
\end{equation}
 A Hermite process  can be defined by any of its equivalent  representations. Here by equivalent representations, we mean   the  represented  processes share the same finite-dimensional distributions.   
  Two of the most well-known representations  are the \emph{time-domain representation} and the \emph{frequency-domain representation} in terms of  \emph{multiple Wiener-It\^o integrals} (see Section \ref{Sec:wi} below). The time-domain representation is given by  
\begin{equation}\label{eq:time domain}
Z_1(t)= a_{p,\beta} \int_{\bb{R}^p}'  \left(\int_0^t  \prod_{j=1}^p (s-x_j)^{\beta/2-1}_+ ds \right) W(dx_1)\ldots W(dx_p),\quad t\ge 0,
\end{equation}
where   $W$ is a Gaussian random measure on $\bb{R}$ with Lebesgue control measure,  the prime $'$ at the top of the integral sign indicates the exclusion of the diagonals $x_i=x_j$, $i\neq j$, in the $p$-tuple stochastic integral,  
 and 
\[
a_{p,\beta}=\left( \frac{(1-p(1-\beta)/2)(1-p(1-\beta))}{p! \mathrm{B}(\beta/2,1-\beta)^p} 
\right)^{1/2} 
\]
is a constant to ensure that $\Var[Z_1(1)]=1$, where $\B(\cdot,\cdot)$  is the beta function. The frequency-domain representation is given by  
\begin{equation}\label{eq:freq domain}
Z_2(t)=b_{p,\beta} \int_{\bb{R}^p}''\frac{e^{it(x_1+\ldots+x_p) }-1}{i(x_1+\ldots+x_k)} \prod_{j=1}^p |x_j|^{-\beta/2} \wh{W}(dx_1)\ldots \wh{W}(dx_p),\quad t\ge 0, 
\end{equation}
where $\wh{W}$ is a complex-valued Hermitian Gaussian random measure  \cite[Definition B.1.3]{pipiras:2017:long} with Lebesgue control measure, the double prime $''$ at the top of the integral sign indicates the exclusion of the hyper-diagonals $x_i=\pm x_j$, $i\neq j$, in the $p$-tuple stochastic integral,  and
\[
b_{p,\beta}=\left( \frac{(p(\beta-1)/2+1)(p(\beta-1)+1)}{p! [\Gamma(1-\beta)\sin (\beta\pi/2)]^p} 
\right)^{1/2}
\]
is a constant to ensure $\Var[Z_2(1)]=1$, where $\Gamma(\cdot)$ is  the gamma function. See \cite[Section 4.2]{pipiras:2017:long} for the derivation of the normalization constants $a_{p,\beta}$ and $b_{p,\beta}$.  It was shown in \cite{taqqu:1979:convergence} that $Z_1(t)$ and $Z_2(t)$ have the same finite-dimensional distributions and thus they represent the same process, which we denote as $Z(t)$. We shall  call such a process a  standard  Hermite process, where the word \emph{standard} corresponds to the normalization $\Var[Z(1)]=1$.

A Hermite process $Z(t)$   has stationary increments and is self-similar with Hurst index 
$H=1-p(1-\beta)/2\in (1/2,1)$,
 namely,  $(Z(ct))_{t\ge 0}$ and $(c^H Z(t))_{t\ge 0}$  have the same finite-dimensional distributions for any constant $c>0$.  In literature, $H$ is often used in place of $\beta$ to parameterize $Z(t)$, whereas   $\beta$ is a convenient choice for this paper.
When the order $p=1$, $Z(t)$ recovers a well-known Gaussian process:   fractional Brownian motion. When $p\ge 2$, the law of $Z(t)$ is non-Gaussian, and  if $p=2$, the process is also known as a Rosenblatt process  \cite{rosenblatt:1961:independence,taqqu:1975:weak}.  All standard  Hermite processes with   Hurst index $H$, regardless of the order, share  the same covariance structure as a standard fractional Brownian motion, that is,
\[
\E [Z(t_1) Z(t_2)]=\frac{1}{2}\left(|t_1|^{2H}+|t_2|^{2H}- |t_1-t_2|^{2H}\right), \quad t_1,t_2\ge 0.
\]

A Hermite process $Z(t)$ often  appears in, but not limited to, a limit theorem  of the form
\begin{equation}\label{eq:limit theorem}
\left(\frac{1}{A(N)}\sum_{n=1}^{\lfloor Nt \rfloor}X_n\right)_{t\ge 0} \Rightarrow (Z(t))_{t\ge 0}, \quad \text{as }N\rightarrow\infty,
\end{equation}
where $\Rightarrow$ stands for a suitable sense of weak convergence (e.g., convergence of finite-dimensional distributions, or weak convergence in Skorokhod space), $A(N)$ is a normalizing sequence regularly varying \cite{bingham:goldie:teugels:1989:regular} with index $H$ as $N\rightarrow\infty$,    $(X_n)$ is a stationary sequence with \emph{long-range dependence}, a notion often characterized by a slow power-law decay of the covariance of $(X_n)$. See, e.g,  \cite{dobrushin:major:1979:non, taqqu:1979:convergence, surgailis:1982:zones,ho:hsing:1997:limit}.  These limit theorems are often termed \emph{non-central limit theorems}, which have found numerous applications in statistical inference under long-range dependence. See, e.g., \cite{bai:2018:instability} and the references therein.

Some alternative representations  of a Hermite process   are known
besides the ones  in \eqref{eq:time domain} and \eqref{eq:freq domain}. Two other representations based on multiple Wiener-It\^o integrals  are  the  \emph{finite-time interval representation} \cite{tudor:2008:analysis,pipiras:taqqu:2010:regularization} and the \emph{positive half-axis representation} \cite{pipiras:taqqu:2010:regularization}. See also  \cite[Section 4.2]{pipiras:2017:long}. In addition, there is a representation involving multiple integral with respect to fractional Brownian motion \cite[Definition 7]{nourdin:2010:central}. See also  \cite[Section 3.1]{tudor:2013:analysis}.
Typically   to obtain a Hermite process as the weak limit, one needs to work with a suitable choice  among these different representations.
    
In this paper, we shall  provide  new multiple Wiener-It\^o integral  representations of   Hermite processes    of   different nature. These representations   involve  the local time of intersecting stationary stable regenerative sets (see Section \ref{Sec:Prelim} below). The reader may directly skip to  Theorem \ref{Thm:Herm Proc} below for a  glimpse. The discovery of such  representations is motivated by the recent works \cite{bai:2019:flow, bai:2019:multstable}. 
 The new representations also shed light on new mechanisms (e.g. \cite{bai:2019:flow}) which may generate Hermite processes as weak limits.

The rest of the paper is organized as follows.    Section  \ref{Sec:Prelim} prepares some necessary background.    Section \ref{Sec:Main} states the main results. The proofs of the results in Section \ref{Sec:Main} are collected in Section \ref{Sec:Pf}.

\section{Preliminaries}\label{Sec:Prelim}

\subsection{Multiple Wiener-It\^o integrals}\label{Sec:wi}
The information  recalled  below about Gaussian analysis and multiple Wiener-It\^o integrals can be found in \cite{janson:1997:gaussian}.    Let $(E,\cl{E},\mu)$ be a $\sigma$-finite measure space, and let $W$ be a  Gaussian (independently scattered) random measure on $E$ with control measure $\mu$ so that $\E W(A)W(B)=\mu(A\cap B)$ for $A,B\in \cl{E}$ with $\mu(A),\mu(B)<\infty$. 
Throughout   this paper, the underlying probability space which carries the randomness of Gaussian random measures is denoted by $(\Omega,\cl{F},\mathbb{P})$. 
Then for $g\in  L^2(E,\cl{E},\mu)$, the Wiener integral $I(g)=\int_E g(x) W(dx)$  can be defined, which forms a linear isometry between $L^2(E,\cl{E},\mu)$ and the Gaussian Hilbert space $H=\{I(g):\ g\in L^2(E,\cl{E},\mu)\}$. 

Let $H^{:p:}=\overline{\cl{P}}_p(H)\cap\left( \overline{\cl{P}}_{p-1}(H)^{\perp}\right)$ be  the $p$-th Wiener chaos of $H$, $p\ge 1$, where   $\overline{\cl{P}}_p(H)$ denotes the closure of the linear subspace of multivariate polynomials of elements of $H$ with degrees $p$ or lower, and $\perp$ in the superscript  denotes the orthogonal complement in $L^2(\Omega,\cl{F},\mathbb{P})$.  Then for $f\in L^2(E^p,\cl{E}^p,\mu^p)$, the $p$-tuple Wiener-It\^o integral 
\[
I_p(f)=\int_{E^p}'  f(x_1,\ldots,x_p)  W(dx_1)\ldots W(dx_p)
\]
can be defined (\cite[Theorem 7.26]{janson:1997:gaussian}).  In fact, $I_p:L^2(E^p,\cl{E}^p,\mu^p) \rightarrow H^{:p:}$ forms a bounded linear operator, and in particular, a \emph{linear isometry} from the subspace of symmetric functions of $L^2(E^p,\cl{E}^p,\mu^p/p!)$ to $H^{:p:}$. In addition, $I_p$ is characterized by the following property: for $f_1,\ldots,f_p\in L^2(E,\cl{E},\mu)$, we have
\begin{equation}\label{eq:mult int wick}
I_p(f_1\otimes\ldots\otimes f_p)=\int_{E^p}' f_1(x_1)\ldots f_p(x_p) W(dx_1)\ldots W(dx_p)=: I(f_1)\ldots I(f_p):,
\end{equation}
where for $\xi_1,\ldots,\xi_p\in H$, the notation $:\xi_1 \ldots \xi_p:$ stands for the Wick product, which is the  projection of the product $\xi_1\ldots \xi_p$   onto the $L^2$ subspace $H^{:p:}$. We note that in   literature, the construction of  $I_p(f)$ often starts with \eqref{eq:mult int wick}  for $f_i=1_{A_i}$ for disjoint $A_i\in \cl{F}$ with $\mu(A_i)<\infty$, so that $: I(f_1)\ldots I(f_p):$ is simply the product $W(A_1)\ldots W(A_p)$. Then the definition is extended to a general $f\in L^2(E^p,\cl{E}^p,\mu^p)$ by linearity and continuity given that $\mu$ is atomless. If $\mu$ has atoms, an extra step in the construction is needed (see e.g., P.42 of \cite{major:2014:multiple}). For generality, we shall use the construction of multiple Wiener-It\^o integrals in \cite[Chapter VII.2]{janson:1997:gaussian} without assuming that $\mu$ is atomless.

The following lemma is useful for changing the underlying measure spaces   in  multiple Wiener-It\^o integral representations of a process. 
\begin{Lem} 
\label{Lem:change space mult int}
Let $(E_j,\cl{E}_j,\mu_j)$, $j=1,2$, be $\sigma$-finite measure spaces.  Suppose  $W_j$ is a Gaussian random measure  defined on $E_j$ with control measure $\mu_j$, $j=1,2$.
Let $(U,\cl{H})$ be a measurable space and 
 suppose $f_j:E_j \rightarrow U  $, $j=1,2$, are measurable  and satisfy
 \begin{equation}\label{eq:equal in induced measure}
\mu_1 \circ f_1^{-1} = \mu_2 \circ f_2^{-1}=:\nu
 \end{equation}
 on $(U,\cl{H})$.
Let $T$ be an  index set and let  $g_t: U^p\rightarrow[-\infty,+\infty]$, $t\in T$,  be a family of  measurable functions   satisfying $g_t\circ f_j^{\otimes p} \in L^2(E^p_j,\cl{E}_j^p, \mu_j^p)$, $j=1,2$. Let
\begin{equation}\label{eq:Z_j(t)}
Z_j(t)=\int_{E_j^p}' g_t\left(f_j(x_1),\ldots,f_j(x_p)\right)  W_j(d x_1)\ldots  W_j(d x_p), \quad j=1,2.
\end{equation}
 Then the two processes $(Z_j(t))_{t\in T}$, $j=1,2$, have the same finite-dimensional distributions.
\end{Lem}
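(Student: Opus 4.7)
The plan is to compare the two processes via a two-step reduction: first reduce to simple-function integrands by approximation and the $L^2$-isometry of multiple Wiener-It\^o integrals, then verify the base case directly from the covariance structure of the underlying Gaussian random measures.

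For the reduction step, applying the change-of-variables formula to \eqref{eq:equal in induced measure} at the product level gives $\mu_j^p\circ(f_j^{\otimes p})^{-1}=\nu^p$, so the pullback $g\mapsto g\circ f_j^{\otimes p}$ is a linear isometry from $L^2(U^p,\nu^p)$ into $L^2(E_j^p,\cl{E}_j^p,\mu_j^p)$. Combined with the $L^2$-isometry of $I_p$ on symmetrized integrands, this shows that any $L^2(\nu^p)$-approximation of $g_t$ induces an $L^2(\Omega)$-approximation of $Z_j(t)$, uniformly in $j\in\{1,2\}$, and hence also of any finite-dimensional vector $(Z_j(t_1),\ldots,Z_j(t_n))$. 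It therefore suffices to prove the equality of finite-dimensional distributions when each $g_t$ lies in a dense subclass of $L^2(\nu^p)$.

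I take this dense subclass to consist of finite linear combinations $g=\sum_k c_k\,\mathbf{1}_{A_1^{(k)}\times\cdots\times A_p^{(k)}}$, where the $A_i^{(k)}\in\cl{H}$ have finite $\nu$-measure and the positional factors within each summand are pairwise disjoint, so that the diagonal exclusion in $I_p$ is automatic and the Wick product in \eqref{eq:mult int wick} reduces to the ordinary product. For such $g$,
$$
I_p^{W_j}\bigl(g\circ f_j^{\otimes p}\bigr)=\sum_k c_k\,W_j\bigl(f_j^{-1}(A_1^{(k)})\bigr)\cdots W_j\bigl(f_j^{-1}(A_p^{(k)})\bigr)
$$
is a fixed polynomial in the centred Gaussian vector $\bigl(W_j(f_j^{-1}(A_i^{(k)}))\bigr)_{i,k}$, whose entries have covariance $\mu_j(f_j^{-1}(A)\cap f_j^{-1}(B))=\nu(A\cap B)$, independent of $j$. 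Consequently these Gaussian vectors are equal in joint law for $j=1,2$, and the same holds for the joint laws of the processes evaluated at finitely many times.

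The main (mild) technical point is ensuring the density of disjoint-rectangle simple functions in $L^2(\nu^p)$, which requires $\sigma$-finiteness of $\nu$ on the relevant support; this can be arranged by restricting to the $\sigma$-finite set determined by any finite collection $g_{t_1},\ldots,g_{t_n}$ under consideration, noting that finiteness of $\int|g_{t_i}|^2\,d\nu^p$ follows from the hypothesis $g_{t_i}\circ f_j^{\otimes p}\in L^2(\mu_j^p)$ together with the pushforward identity. Everything else is a routine application of the isometry $\|I_p(f)\|_{L^2(\Omega)}^2=p!\|\mathrm{sym}(f)\|_{L^2(\mu^p/p!)}^2$ and linearity.
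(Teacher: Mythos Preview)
Your proof is correct and follows essentially the same strategy as the paper's: reduce by density and the $L^2$-isometry of $I_p$ to a simple class of integrands, then identify the joint law in the base case via the covariance structure of the underlying Gaussian variables. The only cosmetic difference is that the paper takes as its dense class finite linear combinations of tensor products $h_1\otimes\cdots\otimes h_p$ with $h_i\in L^2(U,\nu)$ and invokes the Wick-product characterization \eqref{eq:mult int wick} directly, whereas you specialize to disjoint indicator rectangles so the Wick product collapses to an ordinary product; your extra care about $\sigma$-finiteness of $\nu$ is a point the paper leaves implicit.
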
 
\begin{proof} 

First, by Cram\'er-Wold and  linearity of the multiple integrals, it suffices to prove equality of marginal distributions at a single $t\in T$, and we thus simply set  $g_t=g$.   
Suppose first $g=h_1\otimes \ldots \otimes h_p$ where  all $h_1,\ldots,h_p\in L^2(U,\cl{H},\nu)$. Then by \eqref{eq:mult int wick},   the right-hand side of \eqref{eq:Z_j(t)} is equal to
\[
:\int_{E_j} h_1 \circ f_j (x) W_j(dx) \ldots \int_{E_j} h_p \circ f_j (x) W_j(dx):\ ,\quad j=1,2.
\]
So by joint Gaussianity, the conclusion follows from comparing the covariances: for $1\le i_1,i_2\le p$,
\begin{align*}
&\E \left[\int_{E_j} h_{i_1} \circ f_j (u) W_j(dx) \int_{E_j} h_{i_2} \circ f_j (x) W_j(dx)\right]\\=&\int h_{i_1}\circ f_j(x) \ h_{i_2}\circ f_j(x)\mu_j(dx) 
= \int_{U} h_{i_1}(u) h_{i_2}(u)\nu(du),\quad j=1,2,
\end{align*}
where we have used \eqref{eq:equal in induced measure} in the last equality. 

Similarly, using   linearity of the multiple integrals, the conclusion holds if $g$ is a finite linear combination of terms each of the form $h_1\otimes\ldots\otimes h_p$. At last, by the  linear isometry of the multiple integral,    it suffices to note that such linear combinations are dense in $L^2(U^p,\cl{H}^p,\nu^p)$.
\end{proof}

We mention that a similar discussion  as above carries over to the case where $W$ is replaced by a complex-valued    Gaussian random measure. See \cite[Chapter 7, Section 4]{janson:1997:gaussian}.

\subsection{Regenerative sets and local time functional}
Most of the information reviewed in this section about subordinators and regenerative sets can be found in \cite{bertoin:1999:subordinators}.

Recall that a process  $(\sigma(t))_{t\ge0}$ is said to be a  subordinator,   if it is  a non-decreasing   L\'evy process starting at the origin.  The Laplace exponent of $(\sigma(t))_{t\ge0}$ is given by $\E e^{-\lambda \sigma(t)}=\exp(-  t\Phi(\lambda)), \lambda\ge0$, which completely characterizes its law and satisfies  the L\'evy-Khintchine formula: 
\begin{equation}\label{eq:Levy Khin}
\Phi(\lambda)= d\lambda+\int_{(0,\infty)}(1-e^{-\lambda x})\Pi(dx),
\end{equation}
where the constant  $d\ge 0$ is  the drift and   $\Pi$ is the L\'evy measure on $(0,\infty)$ satisfying $\int_{(0,\infty)} (x\wedge 1) \nu(dx)<\infty$. The renewal measure $U$ of $(\sigma(t))_{t\ge 0}$ on $[0,\infty)$ is characterized by  $\int_{[0,\infty)}f(x)U(dx)=\E \int_0^\infty f(\sigma(t))dt$ for any   measurable function $f\ge 0$, which is related to the Laplace exponent through
\begin{equation}\label{eq:Lap Potential}
\int_{[0,\infty)}  e^{-\lambda x} U(dx)=\frac{1}{\Phi(\lambda)}, \quad \lambda\ge 0.
\end{equation}
 The Radon-Nikodym  derivative of $U$ with respect to the Lebesgue measure, if   exists, is called the renewal density.

 Let $\mbf{F}=\mbf{F}([0,\infty))$ denote the space of closed subsets of $[0,\infty)$ equipped with the Fell topology  \cite[Appendix C]{molchanov:2017:theory}. A random element $R$ taking value in  $\mbf{F}$ is said to be a \emph{regenerative set}, if $R$ has the same distribution  as  the closed range $\overline{\{\sigma(t):\ t\ge 0\}}$ where   $(\sigma(t))_{t\ge0}$ is a subordinator. Note that for any constant $c>0$, the time-scaled subordinator $(\sigma(ct))_{t\ge 0}$  and the original $(\sigma(t))_{t\ge 0}$ correspond to the same regenerative set. To make the correspondence between a regenerative set and a subordinator unique,  we shall always  assume the following normalization condition for the Laplace exponent of the subordinator:
\begin{equation}\label{eq:Laplace norm}
\Phi(1)=1.
\end{equation}

A regenerative set $R $ is said to be $\beta$-stable, $\beta\in (0,1)$, if the associated 
   subordinator $(\sigma (t))_{t\ge 0}$ is $\beta$-stable, namely, if the Laplace exponent $\Phi(\lambda)=\lambda^{\beta}$, which
   corresponds to  the   L\'evy measure
\begin{equation}\label{eq:Pi}
\Pi  (dx)=\frac{ \beta}{\Gamma(1-\beta)}x^{-1-\beta} 1_{(0,\infty)}(x) dx.
\end{equation}

A random closed set $F$ in  $\mbf{F}$     is said to be stationary  if 
\begin{equation}\label{eq:tau_x}
\tau_x (F):=F\cap [x,\infty)-x\EqD F
\end{equation} 
for any $x\ge 0$. A $\beta$-stable regenerative set $R$ itself is not  stationary. However,    stationarity in a generalized sense  can be obtained  by a shifted $R$ as follows.
\begin{Pro}\label{Pro:stat set}
Let $\bb{P}_R$ be the distribution of  a $\beta$-stable regenerative set on $\mbf{F}$ and let $\pi_V$ be a Borel measure  on $[0,\infty)$ with $\pi_V(dv)=(1-\beta) v^{-\beta} dv$, $v\in (0,\infty)$.  
There exists a $\sigma$-finite infinite-measure    space $(E,\cl{E},\mu)$ and a measurable mapping $ \overline{R} :E\rightarrow\mbf{F}$, such that  
\[
\mu (  \overline{R}   \in \cdot )=(\bb{P}_R\times \pi_V)(F+v\in \cdot ),
\]
where the right-hand side is understood as the push-forward  measure of $\bb{P}_R\times \pi_V$ under the mapping $(F,v)\mapsto  F+v$.  Furthermore, $ \overline{R} $ is stationary in the sense of
\[
\mu(\tau_x  \overline{R}  \in \cdot )= \mu(  \overline{R} \in \cdot ),
\]
where $\tau_x$ is as in \eqref{eq:tau_x}.
\end{Pro}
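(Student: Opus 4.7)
The plan is to take the natural candidate: set $E = \mbf{F} \times [0,\infty)$ with the product Borel $\sigma$-algebra $\cl{E}$, let $\mu = \bb{P}_R \times \pi_V$ (which is $\sigma$-finite and of infinite total mass, since $\pi_V$ is), and define $\overline{R}(F, v) := F + v$. Then the pushforward identity $\mu(\overline{R} \in \cdot) = (\bb{P}_R \times \pi_V)(F + v \in \cdot)$ holds by construction.

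To prove the stationarity assertion $\mu(\tau_x \overline{R} \in \cdot) = \mu(\overline{R} \in \cdot)$ for fixed $x > 0$, I would split the $v$-integral in the definition of $\mu$ according to whether $v \geq x$ or $v < x$. When $v \geq x$, since $0 \in F$ one has $F + v \subseteq [v,\infty) \subseteq [x,\infty)$, so $\tau_x(F + v) = F + (v - x)$; a change of variable $u = v - x$ shows that this piece contributes density $(1-\beta)(u + x)^{-\beta}$ with respect to $\bb{P}_R(F + u \in \cdot)\, du$. When $v < x$, instead $\tau_x(F + v) = \tau_{x-v}(F)$, and the truncation now falls inside the regenerative set itself. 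Here I would invoke the strong-Markov (regenerative) property of the $\beta$-stable subordinator (see \cite{bertoin:1999:subordinators}): for each $s > 0$, $\tau_s F$ has the same law as $F' + B_s$, with $F'$ an independent copy of $R$ and $B_s$ the overshoot at level $s$, whose density is the classical
\[
f_{B_s}(b) = \frac{\sin(\pi\beta)}{\pi}\, \frac{s^\beta}{b^\beta(b + s)}, \quad b > 0.
\]

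After applying Fubini, both contributions become densities against $\bb{P}_R(F + u \in \cdot)\, du$, and stationarity reduces to verifying the pointwise analytic identity
\[
1 - \left(\frac{u}{u + x}\right)^\beta = \frac{\sin(\pi\beta)}{\pi} \int_0^x \frac{s^\beta (x - s)^{-\beta}}{u + s}\, ds, \qquad u, x > 0.
\]
This identity is the main obstacle. I would verify it via the substitution $s = xt$, which recasts the right-hand side as an Euler integral for the Gauss hypergeometric function ${}_2F_1(1, 1+\beta; 2; -x/u)$, followed by the elementary reduction ${}_2F_1(1, b; 2; z) = (1 - (1-z)^{1-b})/((1-b)z)$ combined with the reflection formula $\Gamma(1+\beta)\Gamma(1-\beta) = \beta\pi/\sin(\pi\beta)$, which matches the two sides exactly.
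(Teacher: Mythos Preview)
Your argument is correct. The construction of $(E,\cl{E},\mu)$ and $\overline R$ is the natural one, and your decomposition of the $v$-integral into $v\ge x$ and $v<x$, together with the strong Markov property of the stable subordinator and the known overshoot density, leads exactly to the analytic identity you state. That identity does hold: after the substitution $s=xt$ the integral is the Euler representation of ${}_2F_1(1,1+\beta;2;-x/u)$ (valid since $-x/u\notin[1,\infty)$ and $0<1+\beta<2$), and the closed form ${}_2F_1(1,b;2;z)=\big(1-(1-z)^{1-b}\big)/((1-b)z)$ together with $\Gamma(1+\beta)\Gamma(1-\beta)=\beta\pi/\sin(\pi\beta)$ reduces it precisely to $1-(u/(u+x))^\beta$. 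A couple of small points you might make explicit in a final write-up: the map $(F,v)\mapsto F+v$ is continuous for the Fell topology (so $\overline R$ is measurable), and in the strong-Markov step the post-passage range $F'$ is indeed independent of the overshoot $B_s$ because $B_s$ is $\cl{F}_{T_s}$-measurable.

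The paper, by contrast, does not prove this proposition directly at all: in the remark following the statement it simply cites \cite[Proposition~4.1(c)]{lacaux:2016:time} and the stationary regenerative set construction of \cite{fitzsimmons:1988:stationary}. So your approach is genuinely different in that it is self-contained: it derives stationarity from first principles (the overshoot law of a stable subordinator plus a hypergeometric identity) rather than importing a structural result about stationary regenerative sets. The advantage of your route is transparency and independence from those references; the advantage of the paper's route is brevity and the fact that the cited results already place the construction in the broader framework of stationary regenerative sets, which is conceptually what underlies the ``delay by the equilibrium distribution $\pi_V$'' mechanism you are exploiting by hand.
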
 
\begin{Rem}
The proposition says that if $V$ is an improper  random variable   governed by  the infinite law $\pi_V$  independent of the $\beta$-stable regenerative set $R$, then the resulting shifted improper   random set $R+V$ is stationary. We use ``improper'' to mean that the distribution may be an infinite measure throughout this paper.  The proposition follows from \cite[Proposition 4.1(c)]{lacaux:2016:time} (note that their $1-\beta$ corresponds to our $\beta$). It can also be obtained by restricting a stationary $\beta$-stable regenerative set on $\bb{R}$ constructed in \cite{fitzsimmons:1988:stationary}  to $[0,\infty)$.  
Note that the improper distribution $\mu (  \overline{R}   \in \cdot )$   stays unchanged if $\pi_V$ is replaced by  a positive constant  multiple of it.    This follows from the self-similarity property: $cR\EqD R$, $c>0$, of a $\beta$-stable regenerative set $R$.
\end{Rem}
\begin{Def}\label{Def:stat set}
We call the improper random closed set $ \overline{R} $ in Proposition \ref{Pro:stat set} a stationary $\beta$-stable regenerative set.
\end{Def}

Next, we recall the local time functionals introduced by \cite{kingman:1973:intrinsic}. For $\beta\in (0,1)$,
define
\[ 
L^{(\beta)}: {F}\rightarrow [0,\infty], ~ L^{(\beta)}(F) :=\limsup_{n\rightarrow\infty}\frac{\lambda\left(F+[- \frac{1}{2n},\frac{1}{2n}]\right)}{ l_{\beta}(n)},
\] 
where $\lambda$ is the Lebesgue measure, 
$F+[- 1/{2n},1/{2n}]=\cup_{x\in F}[x-1/{2n},x+1/{2n}]$,
 and the   normalization sequence
\[ 
l_{\beta}(n)=\int_{0}^{1/n} \Pi((x,\infty))dx= \frac{   n^{\beta-1}}{\Gamma(2-\beta)},
\] 
where $\Pi$ is as in \eqref{eq:Pi}.
We then define 
\begin{equation}\label{eq:L_t}
L_t^{(\beta)}(F):=L^{(\beta)}(F\cap [0,t]), \quad t\ge 0.
\end{equation}
By \cite[Lemma 2.1]{bai:2019:multstable},
  $L_t^{(\beta)}$ is  a measurable  mapping from $\mbf{F}$ to $[0,\infty]$ for each $t\ge 0$. Furthermore,  \cite[Theorem 3]{kingman:1973:intrinsic} entails that if $R$ is a $\beta$-stable regenerative set, then the process $(L_t^{(\beta)}(R ))_{t\ge 0}$  has the same finite-dimensional distributions as the local time process associated with $R$, which is an inverse $\beta$-stable subordinator.

\section{Main results}\label{Sec:Main}

We are now ready to state our main results.  Set 
\begin{equation}\label{eq:beta_p}
\beta_p:=(\beta-1)p+1\in (0,1).
\end{equation}
The range above should be compared to \eqref{eq:beta}.  Recall the standard Hermite process with memory parameter $\beta$ and order $p$ as defined in \eqref{eq:time domain} or \eqref{eq:freq domain}.
\begin{Thm}\label{Thm:Herm Proc} Let      $L_t =L_t^{(\beta_p)}$ be Kingman's local time functional as  in \eqref{eq:L_t} associated with a $\beta_p$-stable regenerative set.
\begin{enumerate}[(a)]
\item Let $ \overline{R} :E \rightarrow\mbf{F}$ be a stationary $\beta$-stable regenerative set defined on a $\sigma$-finite    measure space $(E,\cl{E},\mu)$ as in Definition \ref{Def:stat set}. 
Suppose $W$ is a Gaussian random measure on $E$ with control measure $\mu$.
Then the process
\begin{equation}\label{eq:new rep 1}
Z(t)=c_{p,\beta}\int_{E^p}' L_t \left(\bigcap_{i=1}^p   \overline{R} (x_i)\right) W(d x_1)\ldots W(dx_p)
\end{equation}
has the same finite-dimensional distributions as the standard Hermite process with memory parameter $\beta$ and order $p$, where
\begin{equation}\label{eq:c_{p,beta}}
c_{p,\beta}=\left(\frac{\Gamma(\beta_p) \Gamma(\beta_p+2)
 }{ 2p! \Gamma(\beta)^{p } \Gamma(2-\beta)^{p }}\right)^{1/2}. 
\end{equation}
\item Let $(\Omega' , \cl{F}'  , \mathbb{P}' )$ be a probability space (Note that $(\Omega' , \cl{F}'  , \mathbb{P}' )$ is different from the   probability space $(\Omega,\cl{F},\mathbb{P})$ carrying the randomness of the Gaussian random measure).  Let $R :\Omega'\rightarrow\mbf{F}$ be a $\beta$-stable regenerative set and let $V :\Omega'\rightarrow [0,T]$, $T>0$, be a random variable independent of $R$ satisfying $\mathbb{P}'(V \le v)= T^{\beta-1} v^{1-\beta}$, $v\in [0,T]$.  Suppose $W_T$ is a Gaussian random measure on $\Omega'$ with control measure $\mathbb{P}'$.
Then
 \begin{align}\label{eq:new rep 2}
Z(t)=c_{p,\beta,T}\int_{(\Omega')^p}' L_t  \left(\bigcap_{i=1}^p (R (\omega_i') +V (\omega_i'))\right)  W_T(d\omega_1')\ldots W_T(d\omega_p')   
\end{align}
  has the same finite-dimensional distributions as a standard Hermite process  with memory parameter $\beta$ and order $p$ over the   interval $[0,T]$, where $c_{p,\beta,T}=T^{p(1-\beta)/2}c_{p,\beta}$.
\end{enumerate}
\end{Thm}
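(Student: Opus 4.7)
The plan is to identify both \eqref{eq:new rep 1} and \eqref{eq:new rep 2} with the time-domain representation \eqref{eq:time domain} via a random interval covering approximation of Kingman's local time, combined with Lemma \ref{Lem:change space mult int} to transfer the multiple Wiener-It\^o integrals to $\mathbb{R}$ equipped with Lebesgue measure. First, I would reduce part (b) to part (a). By Proposition \ref{Pro:stat set}, $\mu$ is the pushforward of $\mathbb{P}_R \times \pi_V$ under $(R,v) \mapsto R+v$, so $\overline{R}(x) \subset [V(x),\infty)$ and $L_t(\bigcap_i \overline{R}(x_i)) = 0$ unless every $V(x_i) \le t$. For $t \in [0,T]$, the $E$-integral thus localizes to $\{V(x) \le T\}$, where $\mu$ has total mass $T^{1-\beta}$. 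After normalization this becomes the probability law of $V$ in part (b), and Lemma \ref{Lem:change space mult int} together with the $L^2$-isometry of the multiple Wiener-It\^o integral accounts for the factor $T^{p(1-\beta)/2}$ between $c_{p,\beta,T}$ and $c_{p,\beta}$.

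For part (a), I would use the definition of Kingman's local time to write
\begin{equation*}
L_t^{(\beta_p)}\!\left(\bigcap_{i=1}^p \overline{R}(x_i)\right) = \lim_{n\to\infty} l_{\beta_p}(n)^{-1} \int_0^t \prod_{i=1}^p 1\{\overline{R}(x_i) \cap [s-1/(2n), s+1/(2n)] \neq \emptyset\}\, ds,
\end{equation*}
substitute into \eqref{eq:new rep 1}, and exchange the $s$-integral with the $p$-tuple Wiener-It\^o integral via Fubini. This produces an approximating sequence $Z^{(n)}(t)$ in the $p$-th Wiener chaos, and convergence $Z^{(n)}(t) \to Z(t)$ in $L^2$ requires (i) a uniform $L^2(\mu^p)$ bound on the approximating kernels, relying on renewal-theoretic estimates for intersections of stationary stable regenerative sets and on $\beta_p > 0$, which is equivalent to $\beta > 1-1/p$ in \eqref{eq:beta}; and (ii) $L^2(\mu^p)$-convergence of the kernels to $L_t(\bigcap_i \overline{R}(x_i))$, which follows from the definition of Kingman's local time and the domination in (i).

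Finally, I would identify the $n\to\infty$ limit of $Z^{(n)}$ with \eqref{eq:time domain}. Using Lemma \ref{Lem:change space mult int} with a feature map $E \to \mathbb{R}$ extracted from $\overline{R}(x)$ (e.g.\ through $-V(x)$ or a related point statistic of the regenerative set), combined with a change of variables reducing $\pi_V$ to Lebesgue measure, the multiple Wiener-It\^o integral over $E^p$ transfers to $\mathbb{R}^p$. The small-window hitting probabilities $\mathbb{P}(R \cap [s-\epsilon, s+\epsilon] \neq \emptyset)$ for a $\beta$-stable regenerative set combine with the $p$-fold product structure and the normalization $l_{\beta_p}(n)^{-1}$ to yield, in the limit, the Hermite kernel $\int_0^t \prod_j (s-y_j)_+^{\beta/2-1}\,ds$. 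The main obstacle I expect lies in this final identification: propagating the constants $\Gamma(\beta)$, $\Gamma(2-\beta)$, $\Gamma(\beta_p)$, $\Gamma(\beta_p+2)$ through the limiting procedure to recover $c_{p,\beta}$, and correctly handling the off-diagonal integration (the prime in \eqref{eq:new rep 1}) together with the symmetrization of the limit kernel.
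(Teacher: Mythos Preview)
Your reduction between (a) and (b) is essentially the same as the paper's (though the paper reduces (a) to (b) and proves (b), while you go the other way; this is immaterial). The real divergence is in the identification step, and there your plan has a gap.

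You propose to match the approximating integrals to the \emph{time-domain} representation \eqref{eq:time domain} by applying Lemma \ref{Lem:change space mult int} with a feature map $f:E\to\mathbb{R}$ such as $-V(x)$. But Lemma \ref{Lem:change space mult int} requires the integrand to factor as $g_t(f(x_1),\ldots,f(x_p))$, and neither $L_t(\cap_i\overline{R}(x_i))$ nor its small-window approximation $\prod_i 1\{\overline{R}(x_i)\cap[s\pm\epsilon]\neq\emptyset\}$ depends on the $x_i$ only through the shift $V(x_i)$: the full randomness of the regenerative set $R$ enters. No scalar (or even finite-dimensional) feature map will make the integrand factor, so this route to \eqref{eq:time domain} is blocked.

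More structurally: after the stochastic Fubini you correctly anticipate, the inner $p$-fold integral becomes (up to constants) $H_p(G_\epsilon(s))$ for a \emph{stationary} Gaussian process $G_\epsilon$ with covariance $\sim |s_1-s_2|^{\beta-1}$. Stationary Gaussian processes admit spectral representations $\int e^{i\lambda s}\widehat W(d\lambda)$, not moving-average representations $\int (s-y)_+^{\beta/2-1}W(dy)$ (the latter does not define a stationary process). This is why the paper targets the \emph{frequency-domain} representation \eqref{eq:freq domain}: it writes $G_\epsilon$ spectrally, passes $H_p$ through to a $p$-fold complex Wiener--It\^o integral, and then proves vague convergence of the spectral measures $\widetilde\mu_\epsilon\to c_\beta|\lambda|^{-\beta}d\lambda$ together with the tightness condition \eqref{eq:L^2 tight}. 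The paper also does not use Kingman's $\limsup$ directly for the approximation, but rather the random-covering sets $R_\epsilon$ of \eqref{eq:R_eps} (which have positive Lebesgue measure for $\epsilon>0$), because these give explicit covariance computations via \eqref{eq:point hit R_epsilon} and a tractable martingale structure for the $L^r$ convergence. Your items (i)--(ii) would need analogous machinery, and the paper's Lemma \ref{Lem:local from cover} is exactly that.
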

 
The theorem is proved in Section \ref{Sec:Pf} below.

\begin{Rem}
  In view of  Lemma \ref{Lem:change space mult int},  the finite-dimensional distributions of the process $(Z(t))_{t\ge 0}$ do not depend on choice of the   measure spaces $(E,\cl{E},\mu)$ and $(\Omega',\cl{F}',\mathbb{P}')$.   
 Notation-wise simple representations  can be obtained when the measure spaces are chosen canonically. For example in (a), if one sets $(E,\cl{E},\mu)=(\mbf{F},\cl{B},\cl{L}_\beta)$, where $\cl{B}$ is the Borel $\sigma$-field induced by the Fell topology on $\mbf{F}$ and $\cl{L}_\beta$ denotes the infinite law of a stationary $\beta$-stable regenerative set on $\mbf{F}$, then \eqref{eq:new rep 1} reduces to
\[
c_{p,\beta}\int_{\mbf{F}^p}'  L_t \left(\bigcap_{i=1}^p   \overline{R}_i\right) W(d  \overline{R}_i)\ldots W(d \overline{R}_i),
\] 
where $\overline{R}_i$, $i=1,\ldots,p$, are understood as the identity maps on $\mbf{F}$, and  $W$ is a Gaussian random measure on $\mbf{F}$ with control measure $\cl{L}_\beta$.  For a similar simplification of the representation in  (b), see \eqref{eq:rep interm} below.
\end{Rem}

\begin{Rem}
Note that even in the case $p=1$, Theorem \ref{Thm:Herm Proc} provides new representations for fractional Brownian motion whose Hurst index $H\in (0,1/2)$. In this case, by Gaussianity, the theorem can be established by computing the covariance directly using \cite[Theorem 2.2]{bai:2019:multstable} and  the $L^2$ linear isometry of single stochastic integrals. The proof of Theorem \ref{Thm:Herm Proc} given below, on the other hand, applies to any   $p\in \bb{Z}_+$. 

In addition, it is possible to use \cite[Theorem 2.2]{bai:2019:multstable} and the diagram formula for multiple integrals (\cite[Proposition 4.3.4]{pipiras:2017:long}) to compute the joint moments $\E [Z(t_1)\ldots Z(t_r)]$, $t_1,\ldots,t_r\ge 0$ and identify them with those of standard Hermite processes  (see \cite[Proposition 4.4.1]{pipiras:2017:long}).  This suffices to establish the claims of Theorem \ref{Thm:Herm Proc} when $p\le 2$, but fails to conclude for $p\ge 3$ since  moment determinancy ceases to hold  in this case \cite{slud:1993:moment}.
\end{Rem}

\begin{Rem}
The representations found in this theorem are motivated from \cite{bai:2019:multstable}, where a process is defined similarly as in   \eqref{eq:new rep 1}  or \eqref{eq:new rep 2} but with the Gaussian random measures   replaced by  $\alpha$-stable ones, $\alpha\in (0,2)$.  
  The representations in \eqref{eq:new rep 1} and \eqref{eq:new rep 2}   suggest the possibility of new types of limit theorems leading to Hermite processes. See for example \cite{bai:2019:flow}.
\end{Rem}

\section{Proofs}\label{Sec:Pf}

Throughout $c$ and $c_i$ denote constants whose values may change from line to line. We shall make use of the following  relations about beta and gamma functions: for $a,b>0$ and $x<y$,
\begin{equation}\label{eq:beta gamma}
\int_{y}^x  (u-y)^{a-1}(x-u)^{b-1} du=\B(a,b)(y-x)^{a+b-1},\quad \B(a,b)=\frac{\Gamma(a)\Gamma(b)}{\Gamma(a+b)}.
\end{equation}

\subsection{Constructions of regenerative sets and local times by  random covering}\label{Sec:cover}

The    proof of Theorem \ref{Thm:Herm Proc} uses  a   construction of a regenerative set as the set left uncovered by random intervals due to \cite{fitzsimmons:1985:set}, as well as a related construction of local time  which originates  from \cite{bertoin:2000:two}.   Similar constructions are used   in \cite{bai:2019:multstable}.

Suppose on the probability space $(H,\cl{H},\mathbb{P}_\cl{N})$,   a Poisson point process  $\cl{N}=\sum_{\ell\ge 1} \delta_{\{y_\ell,z_\ell\}}$   on $[0,\infty)^2$ is defined with intensity measure $(1-\beta)  dy z^{-2}dz$, $\beta\in (0,1)$, where $(\{y_\ell,z_\ell\})_{\ell\ge 1}$ is a measurable enumeration  of the points of $\cl{N}$. For $\epsilon\ge 0$, define 
\begin{equation}\label{eq:R_eps}
R_\epsilon  = \bigcap_{\ell:\ z_\ell\ge \epsilon}( y_\ell,y_\ell+z_\ell)^c,
\end{equation}
where the complement is with respect to $[0,\infty)$. In other words, $R_\epsilon $ is the subset of $[0,\infty)$  left uncovered by the collection of (possibly overlapping) open intervals $\{ (y_\ell,y_\ell+z_\ell):\ z_\ell\ge \epsilon\}$, where the left boundary $y_\ell$ and the width   $z_\ell$ are governed by $\cl{N}$.
In view of \cite[Example 1]{fitzsimmons:1985:set}, each $R_\epsilon $, $\epsilon\ge 0$, is a regenerative set on $[0,\infty)$, and in particular, $R_0$ is a $\beta$-stable regenerative set. It is also known that the subordinator associated with $R_\epsilon$ has a positive drift $d_\epsilon$  when $\epsilon>0$.    In contrast to  $R_0$, the L\'evy measure $\Pi_\epsilon$ and the drift $d_\epsilon$    of $R_\epsilon$, $\epsilon>0$, do not     have   simple expressions, although the exact expressions are not needed for our purpose.  While other constructions  of regenerative sets  are possible, the  random covering scheme seems efficient in providing tractable  approximation of the  intersecting  regenerative sets and the local times encountered  in the proof of  Theorem \ref{Thm:Herm Proc}.

By the calculation in the proof of Lemma 2.5 of \cite{bai:2019:multstable}, we have for $\epsilon>0$,
\begin{equation}\label{eq:point hit R_epsilon}
p_\epsilon(x):=\mathbb{P}_\cl{N}(x\in  R_{\epsilon})  =    e^{(\beta-1)x/\epsilon}1_{\{0\le x\le \epsilon\}}+ \left(\frac{\epsilon}{e}\right)^{1-\beta} x^{\beta-1}1_{\{x>\epsilon\}},\quad x\ge 0.
\end{equation} 
Let $u_\epsilon(x)$   be  the renewal density of the subordinator associated with $R_\epsilon$.
By \cite[Propositions 1.7]{bertoin:1999:subordinators}, if $\epsilon>0$, 
\begin{equation}\label{eq:renewal density}
u_\epsilon(x)=  d_{\epsilon}^{-1}p_\epsilon(x)=:d_{\epsilon}^{-1}\left(\frac{\epsilon}{e}\right)^{1-\beta}f_\epsilon(x), \quad x>0.  
\end{equation}
It is elementary to verify that  as $\epsilon\downarrow 0$,
\begin{equation}\label{eq:f_epsilon}
f_\epsilon(x)= (e^{x/\epsilon-1}\epsilon)^{\beta-1}1_{\{0\le x\le \epsilon\}}+ x^{\beta-1}1_{\{x>\epsilon\}}\uparrow x^{\beta-1},\quad x>0.
\end{equation}
Note that
 $\int_{0}^\infty e^{-x} u_\epsilon(x)dx=1$ due to normalization requirement \eqref{eq:Laplace norm} via \eqref{eq:Lap Potential}. Combining this fact with \eqref{eq:f_epsilon} and the monotone convergence theorem, we have $\epsilon\rightarrow 0$,
\begin{equation}\label{eq:d_eps}
d_\epsilon\sim  \Gamma(\beta)\left(\frac{\epsilon}{e}\right)^{1-\beta}  \ \text{ as }\epsilon\rightarrow 0.
\end{equation}
Note that $d_0=0$  since a $\beta$-stable subordinator has no drift.  
Define for all $\epsilon\ge 0$ the measures
\begin{equation}\label{eq:pi_epsilon}
\pi_\epsilon(dx)=d_\epsilon \delta_0 + \overline{\Pi}_\epsilon (x)dx,\ x\ge 0,\quad  \text{and }\quad  \wt{\pi}_\epsilon(\cdot )=\frac{\pi_\epsilon(\cdot\cap [0,1])}{\pi_\epsilon([0,1])},
\end{equation}
where $\overline{\Pi}_{\epsilon}(x)=\Pi_\epsilon((x,\infty))$ is the  tail of the  L\'evy measure $\Pi_\epsilon$ corresponding to $R_\epsilon $, and $\delta_0$ is the delta measure with a unit mass at $0$. Note that $\Pi_0$ is equal to $\Pi$ in \eqref{eq:Pi} and $\wt{\pi}_0=\pi_V$ in Proposition \ref{Pro:stat set}  when restricted to $[0,1]$. Note for each $\epsilon\ge 0$, 
\[
\int_0^y u_\epsilon(x)dx\asymp   y^{\beta}, \quad y>0,
\] 
where $h_1(y)\asymp h_2(y)$ means $ c_1 h_2(y)\le h_1(y)\le c_2 h_2(y)$ for some constants $0<c_1<c_2$. So 
in view of \cite[Proposition 1.4]{bertoin:1999:subordinators}, 
we have  
\[
\int_0^y \overline{\Pi}_{\epsilon}(x)dx \asymp y^{1-\beta}, \quad y>0.
\]
Hence each $\pi_\epsilon$ is a $\sigma$-finite infinite measure on $[0,\infty)$.

Enriching the  space $(H,\cl{H})$  if necessary, suppose $\nu$  is a $\sigma$-finite measure on $\cl{H}$, and  suppose $U_\epsilon:H\rightarrow[0,\infty)$ is an improper random variable   satisfying \begin{equation}\label{eq:R_eps U_eps}
\nu((R_\epsilon,U_\epsilon)\in\cdot)=(\bb{P}_{R_\epsilon}\times \pi_\epsilon)(\cdot),
\end{equation} 
where $\bb{P}_{R_\epsilon}$ is the distribution of $R_\epsilon$ on $\mbf{F}$ and $\pi_\epsilon$ is as in \eqref{eq:pi_epsilon}.    Then in view of \cite{fitzsimmons:1988:stationary}, the improper random set $\overline{R}_\epsilon: =R_\epsilon+U_\epsilon$ is   stationary   in the sense of
\begin{equation}\label{eq:stat improper eps}
\nu(\tau_x  \overline{R}_\epsilon  \in \cdot )= \nu(  \overline{R}_\epsilon \in \cdot ),
\end{equation}
where $\tau_x$ is the shift  as in \eqref{eq:tau_x}. 
 
The next result enables a key coupling argument in the proof of Theorem \ref{Thm:Herm Proc}.
\begin{Lem}\label{Lem:V_eps conv}
We have the  convergence in total variation distance: 
\[\|\wt{\pi}_\epsilon- \wt{\pi}_0\|_{TV}:=\sup\{|\wt{\pi}_\epsilon(A)-\wt{\pi}_0(A)|:\ A\in \cl{B}([0,1])\}\rightarrow 0\] as $\epsilon\rightarrow 0$
\end{Lem}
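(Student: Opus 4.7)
The plan is to decompose the total variation distance by splitting $\pi_\epsilon$ into its atomic part $d_\epsilon \delta_0$ and its absolutely continuous part $\overline{\Pi}_\epsilon(x)\,dx$. Writing $c_\epsilon := \pi_\epsilon([0,1]) = d_\epsilon + \int_0^1 \overline{\Pi}_\epsilon(x)\,dx$ and $c_0 := \int_0^1 \overline{\Pi}_0(x)\,dx = 1/\Gamma(2-\beta)$ (since $d_0 = 0$ and $\overline{\Pi}_0(x) = x^{-\beta}/\Gamma(1-\beta)$), and using $\delta_0 + dx|_{(0,1]}$ as a common dominating measure, one gets
\[
2\|\wt{\pi}_\epsilon - \wt{\pi}_0\|_{TV} \;=\; \frac{d_\epsilon}{c_\epsilon} + \int_0^1 \Bigl| \frac{\overline{\Pi}_\epsilon(x)}{c_\epsilon} - \frac{\overline{\Pi}_0(x)}{c_0} \Bigr|\,dx.
\]
Since $d_\epsilon \to 0$ by \eqref{eq:d_eps}, the plan reduces to establishing $\overline{\Pi}_\epsilon \to \overline{\Pi}_0$ in $L^1([0,1], dx)$, which then forces $c_\epsilon \to c_0 > 0$ and makes the displayed bound vanish.

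To get that $L^1$ convergence I would work through Laplace transforms. An integration by parts applied to the L\'evy-Khintchine formula \eqref{eq:Levy Khin} yields, for any $\lambda > 0$,
\[
\int_0^\infty e^{-\lambda x} \overline{\Pi}_\epsilon(x)\,dx \;=\; \frac{\Phi_\epsilon(\lambda)}{\lambda} - d_\epsilon \;=\; \frac{1}{\lambda\, u_\epsilon^*(\lambda)} - d_\epsilon,
\]
where $u_\epsilon^*(\lambda) = \int_0^\infty e^{-\lambda x} u_\epsilon(x)\,dx$. Feeding in \eqref{eq:renewal density}, the monotone convergence $f_\epsilon(x) \uparrow x^{\beta-1}$ from \eqref{eq:f_epsilon}, and the asymptotic $d_\epsilon \sim \Gamma(\beta)(\epsilon/e)^{1-\beta}$ from \eqref{eq:d_eps}, one obtains $u_\epsilon^*(\lambda) \to \lambda^{-\beta}$, hence the Laplace transform above converges to $\lambda^{\beta-1} = \int_0^\infty e^{-\lambda x} \overline{\Pi}_0(x)\,dx$ for every $\lambda > 0$.

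Each $\overline{\Pi}_\epsilon$ is non-increasing as the tail of a L\'evy measure. A Helly-type monotone-function argument---deriving a uniform local upper bound on $\overline{\Pi}_\epsilon$ from its monotonicity combined with the bounded Laplace transform, extracting a subsequential pointwise limit, and identifying it via uniqueness of Laplace transforms of monotone functions---upgrades the previous convergence to pointwise convergence $\overline{\Pi}_\epsilon(x) \to \overline{\Pi}_0(x)$ at every continuity point of $\overline{\Pi}_0$, hence for all $x > 0$. Specializing the Laplace identity at $\lambda = 1$ and invoking the normalization \eqref{eq:Laplace norm} gives $\int_0^\infty e^{-x}\overline{\Pi}_\epsilon(x)\,dx = 1 - d_\epsilon \to 1 = \int_0^\infty e^{-x}\overline{\Pi}_0(x)\,dx$. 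Since the non-negative functions $e^{-x}\overline{\Pi}_\epsilon$ converge pointwise to $e^{-x}\overline{\Pi}_0$ with matching integrals over $(0,\infty)$, Scheff\'e's theorem yields $e^{-x}\overline{\Pi}_\epsilon \to e^{-x}\overline{\Pi}_0$ in $L^1((0,\infty))$, and multiplying by the bounded factor $e^x$ on $[0,1]$ delivers the sought $L^1([0,1])$ convergence.

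I expect the main technical obstacle to be the Helly-type step that promotes pointwise Laplace-transform convergence to pointwise convergence of the densities $\overline{\Pi}_\epsilon$; without the monotonicity structure one would only obtain vague convergence of measures, which is insufficient for total-variation control. The remaining estimates (the monotone-convergence argument for $u_\epsilon^*$ and the invocation of Scheff\'e) are routine once that pointwise statement is in place.
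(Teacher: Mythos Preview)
Your proposal is correct and follows the same overall skeleton as the paper: reduce the total variation to $d_\epsilon + \int_0^1|\overline{\Pi}_\epsilon-\overline{\Pi}_0|$, establish pointwise convergence $\overline{\Pi}_\epsilon(x)\to\overline{\Pi}_0(x)$ together with convergence of the relevant integrals, and finish with Scheff\'e's lemma. The differences are in how the intermediate facts are obtained. The paper imports the convergence of Laplace exponents and the pointwise convergence of $\overline{\Pi}_\epsilon$ from the literature (Fitzsimmons--Fristedt--Maisonneuve for convergence of the regenerative-set exponents, and Kallenberg's continuity results for L\'evy triplets), then deduces $\pi_\epsilon([0,1])\to\pi_0([0,1])$ by integrating the bounded continuous function $h(x)=(1\wedge x)/(1-e^{-x})$ against the weakly convergent probability measures $d_\epsilon\delta_0+(1-e^{-x})\Pi_\epsilon(dx)$, and applies Scheff\'e directly on $[0,1]$. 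You instead work from the explicit renewal-density formulas \eqref{eq:renewal density}--\eqref{eq:d_eps} to get convergence of the Laplace transform of $\overline{\Pi}_\epsilon$, recover pointwise convergence by a Helly/compactness argument exploiting monotonicity, and apply Scheff\'e on $(0,\infty)$ against the weight $e^{-x}$ using the normalization $\Phi_\epsilon(1)=1$. Your route is more self-contained and avoids the external citations, at the price of having to carry out the Helly step (which you correctly identify as the one place needing care: you will need a uniform-in-$\epsilon$ bound such as $\overline{\Pi}_\epsilon(\delta)\le (1-e^{-\delta})^{-1}$, obtained from monotonicity and $\int_0^\infty e^{-x}\overline{\Pi}_\epsilon\le 1$, plus a uniform control of $\int_0^\delta\overline{\Pi}_\epsilon$ to pass to the limit under the Laplace integral). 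The paper's route is shorter because the cited lemmas package exactly that compactness argument.
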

\begin{proof}
We first show  as $\epsilon\rightarrow 0$,
\begin{equation}\label{eq:conv pi total}
 \pi_\epsilon([0,1])\rightarrow \pi_0([0,1]).
\end{equation}
 By   \cite[Equation (1.11) and Proposition (3.9)]{fitzsimmons:1985:intersections} as $\epsilon\rightarrow 0$,  the Laplace exponent uniquely associated with $R_{\epsilon}$  satisfying \eqref{eq:Laplace norm} converges  pointwise to the Laplace exponent uniquely associated with a $\beta$-stable regenerative set. This, by \cite[Theorem 15.15(ii)]{kallenberg:2002:foundations}, further  implies that as $\epsilon\rightarrow 0$, we have the following weak convergence of measures:
\begin{align}
\wt{\nu}_\epsilon(dx):= d_\epsilon \delta_0 +(1-e^{-x})\Pi_{\epsilon}(dx) ~ \ConvD ~ &(1-e^{-x})\Pi_0(dx)=(1-e^{-x}) \frac{\beta}{\Gamma(1-\beta)}  x^{-\beta-1} dx=:\wt{\nu}_0(dx),\label{eq:nu conv distr}
\end{align} 
where $\wt{\nu}_\epsilon$, $\epsilon\ge0$, are probability measures on $[0,\infty)$ due to \eqref{eq:Levy Khin} and  \eqref{eq:Laplace norm}.
Next by Fubini,  
\begin{align}\label{eq:int pi eps}
\pi_\epsilon([0,1])=d_\epsilon+\int_0^1 \overline{\Pi}_\epsilon(x)dx=d_\epsilon+\int_{(0,\infty)}(1\wedge x)\Pi_\epsilon(dx) = \int_{[0,\infty)}  h(x)  \wt{\nu}_\epsilon(dx),
\end{align}
where
$
h(x):=\frac{1\wedge x}{1-e^{-x}} 
$ for $x>0$ and $h(0):=1$ is a bounded continuous function on $[0,\infty)$. Hence by the weak convergence in \eqref{eq:nu conv distr}, as $\epsilon \rightarrow 0$, we have
\begin{equation}\label{eq:int h_y}
\int_{[0,\infty)} h(x) \wt{\nu}_\epsilon(dx)\rightarrow \int_{[0,\infty)} h(x) \wt{\nu}_0(dx) =\int_0^1 \overline{\Pi}_0(x)dx=\pi_0([0,1])=\frac{1}{\Gamma(2-\beta)}.
\end{equation} 
Combining   \eqref{eq:int pi eps} and \eqref{eq:int h_y}, we obtain \eqref{eq:conv pi total}.
 
Now to conclude the proof, it suffices to show  
\begin{equation}\label{eq:conv total final goal}
\sup\{| \pi_\epsilon(A)-\pi_0(A)|:\ A\in \cl{B}([0,1])\}\rightarrow 0 
\end{equation}
as $\epsilon\rightarrow 0$.  Indeed  for $A\in \cl{B}([0,1])$,
\begin{align}\label{eq:bound total var}
 |\pi_\epsilon(A)-\pi_0(A)|= \left| d_\epsilon \delta_0(A)  + \int_{A\cap (0,1]} \overline{\Pi}_\epsilon(x) -  \overline{\Pi}_0(x) dx\right| \le  d_\epsilon+ \int_{[0,1]} |\overline{\Pi}_\epsilon(x) -  \overline{\Pi}_0(x)|dx.
\end{align}
Note that $d_\epsilon\rightarrow 0$ by \eqref{eq:d_eps}.  
On the other hand,  \cite[Proposition (3.9)]{fitzsimmons:1985:intersections} and \cite[Lemma 15.14(ii)]{kallenberg:2002:foundations} imply       as $\epsilon\rightarrow 0$ that
\begin{align*}
\overline{\Pi}_\epsilon(x) \rightarrow  \overline{\Pi}_0(x),\quad x>0.
\end{align*}
In addition by   \eqref{eq:int pi eps}, \eqref{eq:int h_y} and the fact $d_\epsilon\rightarrow 0$, we have as $\epsilon\rightarrow 0$
\[
\int_0^1 \overline{\Pi}_\epsilon(x)dx \rightarrow \int_0^1 \overline{\Pi}_0(x)dx.
\]
Therefore the second term in the bound in \eqref{eq:bound total var} tends to $0$  as $\epsilon\rightarrow 0$ by Scheff\'e's lemma (e.g, \cite[Item 5.10]{williams:1991:probability}).
So \eqref{eq:conv total final goal} follows. 
\end{proof}
 
Next we turn to the construction of the local time based on the covering scheme introduced above.
 Suppose $R_\epsilon$, $\epsilon\ge 0$, are as in \eqref{eq:R_eps} with the point process $\cl{N}$ defined on a probability space $(H,\cl{H},\mathbb{P}_\cl{N})$. In view of Lemma \ref{Lem:V_eps conv} and a well-known coupling characterization of total variation distance (e.g., \cite[Theorem 2.1]{sethuraman:2002:some}), there exist  random variables $V_\epsilon\ge 0$, $\epsilon\ge 0$,  defined on a probability space $(\Theta,\cl{G},\mathbb{P}_V)$, so that $\mathbb{P}_V(V_\epsilon\in\cdot)=\wt{\pi}_\epsilon(\cdot)$,  and as $\epsilon\rightarrow 0$,
\begin{equation}\label{eq:tv coupling}
\mathbb{P}_V(V_\epsilon = V_0)\rightarrow 1.
\end{equation}

\begin{Lem}\label{Lem:local from cover} 
\begin{enumerate}[(a)]
 For   $0\le  t\le 1$, set  
\begin{equation}\label{eq:L_t^eps}
L_t^{(\epsilon)}(\mbf{h},\mbf{\theta})=\frac{1}{\Gamma(\beta_p)} \left(\frac{\epsilon}{e}\right)^{\beta_p-1} \int_0^t 1\left\{x\in  \bigcap_{i=1}^p \left(R_\epsilon(h_i)+V_{\epsilon}(\theta_i)\right)  \right\} dx,
\end{equation}
where $\mbf{h}:=(h_1,\ldots,h_p)\in H^p$, $\mbf{\theta}:=(\theta_1,\ldots,\theta_p)\in \Theta^p$ and $\beta_p$ is as in \eqref{eq:beta_p}.
\item 
 As $\epsilon =1/n \rightarrow 0$, $n\in \bb{Z}_+$, $L_{t}^{(\epsilon)}$ converges  in $L^r=L^r\big(H^p\times \Theta^p,\cl{H}^{p}\times \cl{G}^p , \mathbb{P}_\cl{N}^p\times \mathbb{P}_V^p\big)$  to a limit $L_t^*$, $r\in \bb{Z}_+$. 
\item Let $\E'$ denote integration with respect to $ \mathbb{P}_\cl{N}^p\times \mathbb{P}_V^p$. Then for $r\in \bb{Z}_+$
\begin{equation}\label{eq:Delta moment}
\E' [(L_{t}^*)^r] =  \frac{r! \Gamma(2-\beta)^p\Gamma(\beta)
^p}{\Gamma((r-1)\beta_p+2)\Gamma(\beta_p)}    \  t^{(r-1)\beta_p-1}.
\end{equation}
\item  We have
\begin{equation}\label{eq:L*=L}
L_t^*(\mbf{h},\mbf{\theta})= L_t\left(\bigcap_{i=1}^p R_0(h_i ) +V_0(\theta_{i})\right)  \quad \mathbb{P}_\cl{N}^p\times \mathbb{P}_V^p\text{-a.e.},
\end{equation}
 where $L_t=L_t^{(\beta_p)}$ is the local time functional as in  \eqref{eq:L_t}.
\end{enumerate}
\end{Lem}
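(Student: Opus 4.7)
My plan is to prove the $L^r$ convergence and explicit moment formula jointly by direct computation, and then identify the limit with Kingman's local time using \cite[Theorem 2.2]{bai:2019:multstable}. The key observation is that the normalization $(\epsilon/e)^{\beta_p - 1}/\Gamma(\beta_p)$ in \eqref{eq:L_t^eps} is chosen precisely so that all $\epsilon$-dependent prefactors cancel after moments are taken, leaving a finite explicit limit.

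The starting point is Fubini together with the product structure of $\mathbb{P}_\cl{N}^p \times \mathbb{P}_V^p$. Writing $c_\epsilon := (\epsilon/e)^{\beta_p - 1}/\Gamma(\beta_p)$,
\[
\E'\big[(L_t^{(\epsilon)})^r\big] = c_\epsilon^r \int_{[0,t]^r} \big[\mathbb{P}(x_1,\ldots,x_r \in R_\epsilon + V_\epsilon)\big]^p d\mbf{x},
\]
where the single-copy probability is evaluated via the regenerative structure of $R_\epsilon$: for ordered $x_{(1)} < \cdots < x_{(r)}$,
\[
\mathbb{P}(x_1,\ldots,x_r \in R_\epsilon + V_\epsilon) = \int_0^{x_{(1)}\wedge 1} d_\epsilon u_\epsilon(x_{(1)} - v)\, \wt{\pi}_\epsilon(dv) \prod_{k=2}^r d_\epsilon u_\epsilon(x_{(k)} - x_{(k-1)}).
\]
Substituting $d_\epsilon u_\epsilon(x) = (\epsilon/e)^{1-\beta} f_\epsilon(x)$ from \eqref{eq:renewal density} and taking the $p$-th power, the total $(\epsilon/e)$-exponent from the probability together with $c_\epsilon^r$ equals $r[(\beta_p - 1) + p(1-\beta)] = 0$ by the definition of $\beta_p$. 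The monotone convergence $f_\epsilon \uparrow x^{\beta-1}$ from \eqref{eq:f_epsilon} combined with $\wt{\pi}_\epsilon \to \wt{\pi}_0$ in total variation (Lemma \ref{Lem:V_eps conv}) then lets me pass to the limit inside the integral. With $\wt{\pi}_0(dv) = (1-\beta) v^{-\beta} dv$ and \eqref{eq:beta gamma}, the shift integral reduces to the constant $\Gamma(\beta)\Gamma(2-\beta)$ (independent of $x_{(1)}$), and the remaining ordered integral is a Dirichlet integral whose value, using $p(\beta - 1) = \beta_p - 1$, delivers the formula \eqref{eq:Delta moment}.

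To upgrade to $L^r$ convergence, I would compute the mixed moment $\E'[(L_t^{(1/n)})^a (L_t^{(1/m)})^b]$ by the same procedure; it converges to the same expression as either pure moment, yielding a Cauchy sequence in $L^{a+b}$ for every $a, b \in \bb{Z}_+$ and hence in $L^r$ for every $r$ by H\"older. The coupling \eqref{eq:tv coupling} lets me take $V_\epsilon = V_0$ eventually almost surely, and the monotonicity $R_\epsilon \subseteq R_{\epsilon'}$ for $\epsilon \le \epsilon'$ (immediate from \eqref{eq:R_eps}) makes the joint hit probabilities tractable across scales. Finally, the identification $L_t^*(\mbf{h}, \mbf{\theta}) = L_t\big(\bigcap_i (R_0(h_i) + V_0(\theta_i))\big)$ almost everywhere follows from \cite[Theorem 2.2]{bai:2019:multstable}, which establishes precisely this Lebesgue-measure approximation of Kingman's local time of intersecting stationary stable regenerative sets with the matching normalization $(\epsilon/e)^{\beta_p - 1}/\Gamma(\beta_p)$.

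The main obstacle I anticipate is the careful bookkeeping around the shift $V_\epsilon$: its law $\wt{\pi}_\epsilon$ is $\pi_\epsilon$ truncated and renormalized to $[0,1]$ rather than the full improper stationary law on $[0,\infty)$, which is only harmless because the lemma restricts to $t \in [0,1]$. I must also verify that the total-variation coupling of Lemma \ref{Lem:V_eps conv} suffices to justify the dominated convergence uniformly in $\mbf{x}$, and track the $\epsilon$-dependent constants --- $d_\epsilon$ via \eqref{eq:d_eps}, $u_\epsilon$ via \eqref{eq:renewal density}, and $c_\epsilon^r$ --- to confirm their exact cancellation in the moment formula.
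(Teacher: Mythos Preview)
Your route is viable but differs from the paper's, and one step is thinner than you make it sound.

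For part (a), the paper does not compute mixed moments. Instead it couples so that eventually $V_{\epsilon}(\theta_i)=V_0(\theta_i)$, introduces a truncated version $L_t^{(\epsilon,m)}$ that starts the integral at $\delta_m(\mbf\theta)>V_0^M(\mbf\theta)$, and invokes \cite[Lemma~2.6]{bai:2019:multstable} to see that $(L_t^{(\epsilon,m)}(\cdot,\mbf\theta))_{\epsilon}$ is a nonnegative $\mathbb{P}_\cl{N}^p$-martingale as $\epsilon=1/n\downarrow 0$ once $\epsilon<\delta_m-V_0^M$. Almost-sure convergence plus the uniform moment bound \eqref{eq:bound Delta moment} gives $L^r$ convergence of the truncation; a separate estimate kills the truncation error uniformly in $\epsilon$ as $m\to\infty$. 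Your mixed-moment approach can also succeed, but the phrase ``by the same procedure'' hides real work: the cross-scale hitting probability $\mathbb{P}_\cl{N}(\{x_i\}\subset R_{\epsilon_1},\{y_j\}\subset R_{\epsilon_2})$ does \emph{not} follow from the single-scale regenerative identity you wrote. It does admit a product form, but you must derive it from the Poisson void-probability calculation directly (using that the relevant avoidance regions in $[0,\infty)^2$ have computable overlaps), and then verify that the $(\epsilon/e)$-powers still cancel. This is precisely where the martingale of \cite[Lemma~2.6]{bai:2019:multstable} earns its keep in the paper's argument; you end up needing an equivalent input.

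For part (c), the paper does not appeal to \cite[Theorem~2.2]{bai:2019:multstable}; in context that result supplies moment formulas, not the approximation statement. The identification is instead modeled on \cite[Lemma~2.7]{bai:2019:multstable}: one writes $\bigcap_i(R_0(h_i)+V_0(\theta_i))=R^*+V^*$ with $R^*$ a $\beta_p$-stable regenerative set independent of $V^*$ (via \cite[Lemma~3.1]{samorodnitsky:2019:extremal}), checks that $L_t^*$ is a continuous additive functional increasing only on $R^*+V^*$, and then invokes Kingman's characterization \cite[Theorem~3]{kingman:1973:intrinsic} together with \cite[Theorem~(3.1)]{maisonneuve:1987:subordinators} to identify it with $L_t^{(\beta_p)}$ up to a constant, fixed by matching first moments. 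Your one-line citation skips the random-shift decomposition, which is the step that is genuinely new relative to the deterministic-shift case in \cite{bai:2019:multstable}.

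Part (b) is handled essentially the same way in both approaches; your cancellation and Dirichlet-integral computation is correct.
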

\begin{Rem}
Results similar to Lemma \ref{Lem:local from cover} above were  established in \cite{bai:2019:multstable} but with $V_\epsilon(\theta_i)$ and $V_0(\theta_i)$ replaced by fixed constants $v_i$, $i=1,\ldots,p$.
\end{Rem}

We need the following preparation for the proof of the lemma. \emph{Throughout an integral $\int_a^b \cdot \ dx$ is understood as zero if $a\ge b$. }
\begin{Lem}
 For   $\delta,\eta\ge 0 $,   $\mbf{v}=(v_1,\ldots,v_p)\in [0,1]^p$ and $\mbf{h}=(h_1,\ldots,h_p)\in H^p$, define
\[
\Delta_{\delta,\eta}^{(\epsilon)}(\mbf{h};\mbf{v})=\frac{1}{\Gamma(\beta_p)} \left(\frac{\epsilon}{e}\right)^{\beta_p-1} \int_\delta^\eta 1\left\{x\in  \bigcap_{i=1}^p (R_\epsilon(h_i)+v_i) \right\} dx
\]
Let $\E_\cl{N}$  denote integration with respect to $\mathbb{P}_\cl{N}^p$ on $H^p$ and suppose $r\in \bb{Z}_+$.
Then   
 \begin{align}
 \E_{\cl{N}}\left[ \Delta_{\delta,\eta}^{(\epsilon)}( \cdot ;\mbf{v})^r\right]=&\frac{r!}{\Gamma(\beta_p)^r}\int_{\delta<x_1<\ldots<x_r<\eta}  \left(\prod_{i=1}^p f_\epsilon(x_1-v_i) \right) f_\epsilon(x_2-x_1)^{p} \ldots f_{\epsilon}(x_r-x_{r-1})^{p}   d\mbf{x} \label{eq:Delta moment pre}\\ \le & c  (\eta-\delta)_+^{r\beta_p},\label{eq:bound Delta moment}
 \end{align}
 where $f_\epsilon(x)$ is as in \eqref{eq:f_epsilon}  if $x\ge 0$,   $f_\epsilon(x):=0$ for $x<0$,
and $c>0$ is constant which does not depend on $\epsilon$, $\delta$, $\eta$ or $\mbf{v}$.
\end{Lem}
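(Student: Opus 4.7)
The plan is to establish the moment identity \eqref{eq:Delta moment pre} and the uniform bound \eqref{eq:bound Delta moment} separately.

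For the identity, I would first expand $\Delta_{\delta,\eta}^{(\epsilon)}(\mbf{h};\mbf{v})^r$ as an $r$-fold integral over $[\delta,\eta]^r$, apply Fubini to bring $\E_\cl{N}$ inside, and symmetrize the domain to $\{\delta<x_1<\ldots<x_r<\eta\}$ at the price of a factor $r!$. Since under $\mathbb{P}_\cl{N}^p$ the copies $R_\epsilon(h_1),\ldots,R_\epsilon(h_p)$ are independent, the expectation factorizes as $\prod_{i=1}^p\mathbb{P}_\cl{N}(\{x_1-v_i,\ldots,x_r-v_i\}\subset R_\epsilon)$. For each $i$, the regenerative property of $R_\epsilon$ applied inductively at successive hits (using that for $\epsilon>0$ the drift $d_\epsilon>0$, so $p_\epsilon(x)=d_\epsilon u_\epsilon(x)$ is a genuine positive single-point hitting probability) yields
\[
\mathbb{P}_\cl{N}\bigl(\{x_1-v_i,\ldots,x_r-v_i\}\subset R_\epsilon\bigr)=p_\epsilon(x_1-v_i)\prod_{k=2}^r p_\epsilon(x_k-x_{k-1}).
\]
Substituting $p_\epsilon(x)=(\epsilon/e)^{1-\beta}f_\epsilon(x)$ and noting $p(1-\beta)=1-\beta_p$, the accumulated factor $(\epsilon/e)^{rp(1-\beta)}=(\epsilon/e)^{-r(\beta_p-1)}$ exactly cancels the prefactor $(\epsilon/e)^{r(\beta_p-1)}$ in $\Delta^r$, giving \eqref{eq:Delta moment pre}.

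For the bound, the key pointwise estimate is $f_\epsilon(x)\le e^{1-\beta}x^{\beta-1}$ for every $x>0$: the case $x>\epsilon$ is immediate since $e^{1-\beta}\ge 1$, and for $x\in(0,\epsilon]$, rewriting the right-hand side as $(x/e)^{\beta-1}$ and using $\beta-1<0$ reduces the inequality to $e^{x/\epsilon}\epsilon\ge x$, which holds because $g(x):=e^{x/\epsilon}\epsilon-x$ satisfies $g(0)=\epsilon\ge 0$ and $g'(x)\ge 0$. For the initial factor I then use
\[
\prod_{i=1}^p f_\epsilon(x_1-v_i)\le c\prod_{i=1}^p(x_1-v_i)_+^{\beta-1}\le c\,(x_1-v_{\max})_+^{p(\beta-1)}=c\,(x_1-v_{\max})_+^{\beta_p-1},
\]
with $v_{\max}:=\max_iv_i$, exploiting that $\beta-1<0$. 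The resulting upper bound on the integral is evaluated by integrating from $x_r$ down to $x_2$ using the beta-gamma identity \eqref{eq:beta gamma}: each such step produces a factor of the form $\B(\beta_p,(r-k)\beta_p+1)(\eta-x_{k-1})^{(r-k+1)\beta_p}$, thereby raising the exponent of $\eta-(\cdot)$ by $\beta_p$. The last integration in $x_1$ is handled by a brief case analysis on $v_{\max}$: if $v_{\max}\le\delta$, use $(x_1-v_{\max})^{\beta_p-1}\le(x_1-\delta)^{\beta_p-1}$ (negative exponent) and invoke \eqref{eq:beta gamma} once more to obtain $(\eta-\delta)^{r\beta_p}$; if $v_{\max}>\delta$, the domain shrinks to $(v_{\max},\eta)$ and the beta integral produces $(\eta-v_{\max})^{r\beta_p}\le(\eta-\delta)^{r\beta_p}$. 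Either way, the bound holds with a constant $c$ depending only on $r$ and $\beta$.

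The one delicate technical step is the factorization of the joint hitting probability: it is essential that for $\epsilon>0$ the drift $d_\epsilon$ is strictly positive, so that $p_\epsilon$ is a genuine probability and the strong Markov property of the associated subordinator at each successive hit legitimately yields the product structure. Everything else reduces to pointwise inequalities and iterated applications of the beta-gamma identity.
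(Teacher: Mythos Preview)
Your proposal is correct and follows essentially the same approach as the paper: Fubini plus symmetrization plus the regenerative factorization for \eqref{eq:Delta moment pre}, then the pointwise bound $f_\epsilon(x)\le cx^{\beta-1}$, the product estimate $\prod_i(x_1-v_i)_+^{\beta-1}\le(x_1-v_{\max})_+^{\beta_p-1}$, iterated beta integrals in $x_r,\ldots,x_2$, and the same case split on $v_{\max}$ versus $\delta$ for the final $x_1$-integral. The only cosmetic difference is that the paper invokes the monotone convergence $f_\epsilon(x)\uparrow x^{\beta-1}$ from \eqref{eq:f_epsilon} to get $f_\epsilon(x)\le x^{\beta-1}$ directly, whereas you prove the slightly weaker $f_\epsilon(x)\le e^{1-\beta}x^{\beta-1}$ by hand; the extra constant is harmless.
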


\begin{proof} 
Suppose   $\delta< \eta$, otherwise the conclusion is trivial.
Let $p_\epsilon(x)$    be as in \eqref{eq:point hit R_epsilon} when $x\ge 0$ and   set $p_\epsilon(x)=0$ if $x<0$. 
 For $0\le x_1<\ldots<x_r$ and $\epsilon>0$, we have
\begin{align*}
\mathbb{P}_\cl{N}^p\left(\left\{\mbf{h}\in H^p:\ \{x_1,\ldots,x_r\}\subset  \bigcap_{i=1}^p (R_\epsilon(h_i)+v_i)\right\}\right)&= \prod_{i=1}^p \mathbb{P}_\cl{N}(\{h\in H:\ \{x_1,\ldots,x_r\}\subset (R_\epsilon(h)+v_i)\})\\
&=\prod_{i=1}^p \Big( p_\epsilon(x_1-v_i)p_\epsilon(x_2-x_1) \ldots p_{\epsilon}(x_r-x_{r-1})  \Big),
\end{align*}
where for the last equality, we have used the regenerative property of $R_\epsilon$ \cite[Equation (6)]{fitzsimmons:1985:set}. See also the proof of   \cite[Lemma 2.5]{bai:2019:multstable}. Then \eqref{eq:Delta moment pre} follows from
    Fubini, the relation $f_\epsilon(x)=  (\epsilon/e)^{\beta-1}p_\epsilon(x)$ (see \eqref{eq:renewal density}) and a symmetry in the multiple integral.
    
     Next, from \eqref{eq:f_epsilon} we have for any $\epsilon>0$,
\begin{equation}\label{eq:bound u_epsilon}
f_\epsilon(x)\le   x^{\beta-1}, \quad x>0,
\end{equation} 
  Hence  for some constants $c_1,c_2>0$ free of $\epsilon$, $\delta$, $\eta$ or $\mbf{v}$ (recall $\beta_p=p(\beta-1)+1\in (0,1)$), 
\begin{align*}
\E_{\cl{N}} \Delta_{\delta,\eta}^{(\epsilon)}( \cdot ;\mbf{v})^r &\le c_1\int_{\delta<x_1<\ldots<x_r<\eta}  \left(\prod_{i=1}^p  (x_1-v_i)_+^{\beta-1} \right)  (x_2-x_1)^{\beta_p-1} \ldots  (x_r-x_{r-1})^{\beta_p-1}   d\mbf{x}\\
&= c_2 \int_{\delta}^{\eta}  \left(\prod_{i=1}^p  (x_1-v_i)_+^{\beta-1} \right)  (\eta-x_1)^{(r-1)\beta_p}  dx_1 =: c_2 g_{\delta,\eta}(v_1,\ldots,v_p),
\end{align*}
where for the  first equality above, we have integrated out the variables in the order $x_r$,$x_{r-1}$,\ldots, $x_2$ and repeatedly applied  \eqref{eq:beta gamma}. 
Note that the function $g_{\delta,\eta}:[0,1]^p\rightarrow \bb{R}$ is symmetric, so we suppose   without loss of generality that $0\le v_1\le \ldots \le v_p \le 1$ below. Then by monotonicity and \eqref{eq:beta gamma},
\begin{align*}
g_{\delta,\eta}(v_1,\ldots,v_p)=&\int_{\delta}^{\eta}   \left(\prod_{i=1}^p  (x-v_i)^{\beta-1}_+ \right)  (\eta-x)^{(r-1)\beta_p}  dx \le  \int_{\delta}^{\eta}   (x-v_p)_+^{\beta_p-1} (\eta-x)^{(r-1)\beta_p}dx \notag \\\le &
  1_{\{v_p\ge \delta\}} \B(\beta_p,(r-1)\beta_p+1) (\eta-v_p)_+^{r\beta_p}+  1_{\{v_p< \delta\}}\int_{\delta}^{\eta}   (x-\delta )^{\beta_p-1} (\eta-x)^{(r-1)\beta_p}dx  \notag \\
  \le & c (\eta-\delta)^{r\beta_p}.  
\end{align*}
The proof is concluded. 
\end{proof}

\begin{proof}[Proof of Lemma \ref{Lem:local from cover}] 

All conclusions trivially hold if $t=0$. Suppose $0< t\le 1$.

\noindent(a) 
Let 
\[\Theta_*:=  \bigcup_{n=1}^\infty \{\theta:\ V_{1/n}(\theta)=V_0(\theta)\},
\] and we have $\mathbb{P}_V(\Theta_*)=1$ by \eqref{eq:tv coupling}. 
Let \[V_0^{M}(\mbf\theta)= \max(V_0(\theta_i),~i=1,\ldots,p).\]   Set
\[
D_1=\{\mbf{\theta}\in \Theta^p_*:\  0<V_0^{M}(\mbf\theta)<t \}, \quad D_2=\{\mbf{\theta}\in \Theta^p_*:\   V_0^{M}(\mbf\theta)>t\}
\]
and
\[E_1= H^p \times  D_1,\quad   E_2=H^p \times  D_2  ,
\]
 which satisfy $(\mathbb{P}_\cl{N}^p\times \mathbb{P}_V^p)(E_1\cup E_2)  =1$ because the distribution $\wt{\pi}_0$ of $V_0$ is continuous. We shall establish the $L^r$ convergence restricted on $E_1$ and $E_2$  respectively.  
  Let $\E_{\cl{N}}$ and $\E_{V}$ denote  the integrations with respect to   $\mathbb{P}_{\cl{N}}^p$ and $\mathbb{P}_V^p$ respectively.

First, suppose $\mbf{\theta}\in D_1$. In this case, since $V_0^M(\mbf{\theta})\ge \inf \left(\cap_{i=1}^p R_\epsilon(h_i)+V_{0}(\theta_i)\right)$,
\begin{align*}
L_t^{(\epsilon)}(\mbf{h},\mbf{\theta})=\frac{1}{\Gamma(\beta_p)}\left(\frac{\epsilon}{e}\right)^{\beta_p-1} \int_{V_0^M(\mbf{\theta})}^t 1\left\{x\in  \bigcap_{i=1}^p (R_\epsilon(h_i)+V_{\epsilon}(\theta_i)) \right\} dx.
\end{align*}
For $m\in \bb{Z}_+$, set \[\delta_m(\mbf{\theta})= (1-m^{-1})V_0^M(\mbf{\theta})+m^{-1}t,\] which satisfies $0 <V_0^M(\mbf{\theta}) <\delta_m(\mbf{\theta})<t$. Define
\begin{align}\label{eq:L_t eps m}
L_t^{(\epsilon,m)}(\mbf{h},\mbf{\theta})=\frac{1}{\Gamma(\beta_p)} \left(\frac{\epsilon}{e}\right)^{\beta_p-1} \int_{\delta_m(\mbf{\theta})}^t 1\left\{x\in \bigcap_{i=1}^p ( R_\epsilon(h_i)+V_{\epsilon}(\theta_i)) \right\} dx \le L_{t}^{(\epsilon)}(\mbf{h},\mbf{\theta}).
\end{align}
If $\epsilon=1/n$ is sufficiently small so that      $V_\epsilon(\theta_i)=V_0(\theta_i)$ for all $i=1,\ldots,p$ and  $\epsilon<\delta_m(\mbf{\theta})-V_0^M(\mbf{\theta})$,  by \cite[Lemma 2.6]{bai:2019:multstable}, $(L_t^{(\epsilon,m)}(\cdot,\mbf{\theta}))_{\epsilon>0} $ forms a nonnegative $\mathbb{P}_\cl{N}^p$-martingale as $\epsilon$ decreases. So $L_t^{(\epsilon,m)}(\cdot,\mbf{\theta}) $  converges    $\mathbb{P}_{\cl{N}}^p$-a.e.\  as $\epsilon=1/n\rightarrow 0$, and hence  $L_t^{(\epsilon,m)}$ converges $\mathbb{P}_{\cl{N}}^p\times \mathbb{P}_{V}^p$-a.e.\ by Fubini. On the other hand, for any $r\in \bb{Z}_+$, by Fubini, \eqref{eq:bound Delta moment} and \eqref{eq:L_t eps m},
\begin{equation}\label{eq:for ui}
\bb{E}' |L_t^{(\epsilon,m)}|^r \le  \E_{V}\E_\cl{N}[(L_t^{(\epsilon)})^r] \le c t^{r\beta_p}.
\end{equation}
Hence the $L^r$ convergence of  $L_t^{(\epsilon,m)}1_{E_1}$    as $\epsilon=1/n\rightarrow 0$ follows from  uniform integrability. On the other hand, by \eqref{eq:bound Delta moment} again,
\[
 \E_{\cl{N}} |L_{t}^{(\epsilon)}(\cdot,\mbf{\theta})- L_{t}^{(\epsilon,m)}(\cdot ,\mbf{\theta})|^r    \le c   [\delta_m(\mbf{\theta})-V_0^M(\mbf{\theta})]^{r\beta_p}  \le c t m^{-r\beta_p}.
\]
Therefore  
\[
\lim_{m\rightarrow \infty}\sup_{\epsilon>0} \|L_{t}^{(\epsilon)}1_{E_1}- L_{t}^{(\epsilon,m)} 1_{E_1}\|_{L^r}=0.
\]
This together with the $L^r$ convergence of  $L_{t}^{(\epsilon,m)}1_{E_1}$ as $\epsilon=1/n\rightarrow 0$ implies that $L_{t}^{(\epsilon)}1_{E_1}$ is   Cauchy   in $L^r$ as $\epsilon=1/n\rightarrow 0$, and thus converges in $L^r$.

 Next, suppose $\mbf{\theta}\in D_2$. When  $\epsilon$ is small enough so that $V_{\epsilon}(\theta_i)=V_0(\theta_i)$ for all $i=1,\ldots,p$,  we have $L_{t}^{(\epsilon)}(\mbf{h},\mbf{\theta})=0$  
since $t\le \inf\left(\cap_{i=1}^p R_\epsilon(h_i)+V_{0}(\theta_i)\right)$ in this case.  Then the $L^r$ convergence of $L_t^{(\epsilon)}1_{E_2}$ follows from  uniform integrability by \eqref{eq:for ui}.

\medskip
\noindent(b)   
 By   \eqref{eq:f_epsilon}, \eqref{eq:Delta moment pre} and     monotone convergence theorem, we have for $\mbf{\theta}\in \Theta_*^p$  
\begin{align*}
 \E_{\cl{N}} L_t^*(\cdot,\mbf{\theta})^r &= \frac{r!}{\Gamma(\beta_p)^r}\int_{0<x_1<\ldots<x_r<t}  \left(\prod_{i=1}^p  (x_1-V_0(\theta_i))_+^{\beta-1} \right)  (x_2-x_1)^{\beta_p-1} \ldots  (x_r-x_{r-1})^{\beta_p-1}   d\mbf{x}.
\end{align*}
Hence by Fubini and \eqref{eq:beta gamma},
\begin{align*}
\E'[(L_t^*)^r] &=    \frac{r!(1-\beta)^p}{\Gamma(\beta_p)^r}  \int_{0<x_1<\ldots<x_r<t}  \int_{[0,1]^p} \prod_{i=1}^p  v^{-\beta}_i \left(\prod_{i=1}^p  (x_1-v_i)_+^{\beta-1} \right)  d\mbf{v}   \\ & \qquad\qquad\qquad  \times (x_2-x_1)^{\beta_p-1} \ldots  (x_r-x_{r-1})^{\beta_p-1}   d\mbf{x} \\
&=  \frac{r!(1-\beta)^p\B(1-\beta,\beta)^p}{\Gamma(\beta_p)^r}  \int_{0<x_1<\ldots<x_r<t}  (x_2-x_1)^{\beta_p-1} \ldots  (x_r-x_{r-1})^{\beta_p-1} d\mbf{x}.
\end{align*}
The last expression is equal to that in \eqref{eq:Delta moment} through repeated applications of   \eqref{eq:beta gamma}.

\medskip\noindent(c)
 This can be   proved as   \cite[Lemma 2.7]{bai:2019:multstable}, so we only provide a sketch.  Write \[\bigcap_{i=1}^p \left( R_0(h_i)+V_0(\theta_i)\right) =R^*(\mbf{h},\mbf{\theta})+V^*(\mbf{h},\mbf{\theta}),\] 
 where   $V^*(\mbf{h},\mbf{\theta})=\inf\cap_{i=1}^p \left(R_0(h_i)+V_0(\theta_i)\right)$ and $R^*$ is a $\beta_p$-stable regenerative set starting at the origin  independent of $V^*$  under $\mathbb{P}_V^p\times \mathbb{P}_\cl{N}^p$  \cite[Lemma 3.1]{samorodnitsky:2019:extremal}. By its construction,  $(L_t^*)_{t\ge 0}$ is  an
 additive  functional  which only increases on $R^*+V^* $.  In addition, by \eqref{eq:Delta moment}, increment-stationarity of $L_t^*$ and Kolmogorov continuity theorem,  $(L_t^*)_{t\ge 0}$ admits a $\mathbb{P}_\cl{N}^p\times \mathbb{P}_V^p$-version which is continuous in $t$.  Therefore, in view of  \cite[Theorem 3]{kingman:1973:intrinsic} and \cite[Theorem (3.1)]{maisonneuve:1987:subordinators}, the equality in \eqref{eq:L*=L} holds up to a positive multiplicative constant. This constant can be shown to be $1$ by comparing the  moments  as in the proof of \cite[Lemma 2.7]{bai:2019:multstable}.

\end{proof}

\subsection{Proof of Theorem \ref{Thm:Herm Proc}}

\begin{proof} 
We first show the equivalence between the representations in Theorem \ref{Thm:Herm Proc}  (a)  and  (b),   for which  we shall fix $T>0$ and consider  $t\in [0,T]$.  In view of Proposition \ref{Pro:stat set} and Lemma \ref{Lem:change space mult int},  the representation in \eqref{eq:new rep 1} is equivalent to 
\begin{equation}\label{eq:rep interm}
c_{p,\beta}\int_{(\mbf{F}\times [0,\infty))^p}' L_t  \left(\bigcap_{i=1}^p (R_i +v_i)\right)  W^*(d
R_1,dv_1)\ldots W^*(dR_p,dv_p),  
\end{equation}
where $W^*$ is a Gaussian random measure on $\mbf{F}\times [0,\infty)$ with control measure $\bb{P}_R\times \pi_V$. Observe that since $t\le T$, the integrand above is zero if $v_i>T$ for some $i=1,\ldots,p$. So  the integral domain $(\mbf{F}\times [0,\infty))^p$ in \eqref{eq:rep interm} can be replaced by $(\mbf{F}\times [0,T])^p$, and   $\pi_V$ can be viewed as its restriction on $[0,T]$.
Then   define a Gaussian random measure by 
\begin{equation}\label{eq:W_T^*}
W_T^*(\cdot)=\pi_V([0,T])^{-1/2} W^*(\cdot)=T^{(\beta-1)/2} W^*(\cdot),
\end{equation}
 whose control measure is now the probability measure $\bb{P}_R\times \wt{\pi}_V$, where $\wt{\pi}(dv)=T^{\beta-1}(1-\beta) v^{-\beta}dv$, $v\in (0,T)$. Substituting $W^*$ by $W_T^*$ in  \eqref{eq:rep interm}, the equivalence to the representation in (b) then follows from Lemma \ref{Lem:change space mult int}.  Also because of \eqref{eq:W_T^*}, the relation $c_{p,\beta,T}=T^{p(1-\beta)/2}c_{p,\beta}$ holds.

Next we    prove (b). We shall assume $T=1$ for simplicity, and the argument is similar for general $T$. By Lemma \ref{Lem:local from cover},  the $L^2$ linear isometry of multiple Wiener-It\^o integrals (Section \ref{Sec:wi}) and the standardization of variance at $t=1$,  the second moment of the expression in \eqref{eq:rep interm}  when  $t=1$ is equal to   
\[
p!  \frac{2! \Gamma(\beta)^p \Gamma(2-\beta)^p}{\Gamma(\beta_p)\Gamma(\beta_p+2)} c_{p,\beta}^2=1. 
\]
This implies \eqref{eq:c_{p,beta}}.

 Let $(H,\cl{H},\mathbb{P}_\cl{N})$, $(\Theta,\cl{G},\mathbb{P}_V)$, $L_t^*$, $L_t^{(\epsilon)}$, $V_\epsilon$ and $R_\epsilon$ 
be as in Lemma \ref{Lem:local from cover}.  Let   $\nu$ and $U_\epsilon$ be as described in the paragraph above \eqref{eq:stat improper eps}.  In view of Lemma \ref{Lem:change space mult int}, without loss of generality,  one can   set the probability space   in \eqref{eq:new rep 2} as \[(\Omega',\cl{F}',\mathbb{P}')= (H\times \Theta, \cl{H}^{p}\times\cl{G}^p, \mathbb{P}_\cl{N}\times \mathbb{P}_{V}),\]
  assume that $ \mathbb{P}_{\cl{N}}\times \mathbb{P}_{V}$ is atomless, and choose $R=R_0$ and $V=V_0$.
In view of (\ref{eq:L*=L}), for $0 \le  t\le 1$ we have a.s.  
\begin{align*}
\wt{Z}(t):=c_{p,\beta}^{-1}Z(t) = \int_{(H\times\Theta )^p}' L_t^*(\mbf{h},\mbf{\theta}) W(dh_1,d\theta_{1})\ldots  W(dh_p,d\theta_{p}).
\end{align*}
For $\epsilon>0$, define
\begin{equation}\label{eq:H_eps}
\wt{Z}_{\epsilon}(t)=\int_{(H\times\Theta )^p}' L_t^{(\epsilon)}(\mbf{h},\mbf{\theta}) W(dh_1,d\theta_{1})\ldots  W(dh_p,d\theta_{p}).
\end{equation}
By Lemma \ref{Lem:local from cover},    $L_t^{(\epsilon)}$   converges to  $L_t$ in $L^2$ as $\epsilon=1/n\rightarrow 0$.   So by the $L^2$ linear isometry of multiple integrals,  as $\epsilon=1/n\rightarrow 0$,
\begin{equation}\label{eq:L^2 conv}
\wt{Z}_{\epsilon}(t)\overset{L^2(\Omega)}{\longrightarrow}   \wt{Z}(t), \quad t\ge 0.
\end{equation}   

Next, for $\epsilon>0$, define the Gaussian processes 
\begin{equation}\label{eq:Gaussian proc}
G_{\epsilon}(x)= \left(\frac{\pi_\epsilon([0,1])}{ d_\epsilon}\right)^{1/2} \int_{H\times \Theta} 1\{x\in R_\epsilon(h)+V_{\epsilon}(\theta)\}  W(dh,d\theta), \quad x\in [0,1], 
\end{equation}
and 
\begin{equation}
G_{\epsilon}^*(x)= d_\epsilon^{-1/2} \int_{H\times \Theta} 1\{x\in R_\epsilon(h)+U_{\epsilon}(\theta)\}  W(dh,d\theta), \quad x\in [0,\infty).
\end{equation}
We claim that the Gaussian process $(G_{\epsilon}^*(x))_{x\in [0,\infty)}$ is stationary  and
\begin{equation}\label{eq:G=G^*} 
(G_\epsilon(x))_{x\in [0,1]}\EqFDD(G_\epsilon^*(x))_{x\in [0,1]},
\end{equation}
where $\EqFDD$ means equality in finite-dimensional distributions.  
Indeed for $0\le x\le y$, in view of \eqref{eq:stat improper eps},
\begin{align*}
\E [G_\epsilon^*(x)G_\epsilon^*(y)] &=  d_\epsilon^{-1}  \nu(\{x,y\}\subset R_\epsilon+U_\epsilon) =  d_\epsilon^{-1} \nu(\{0,y-x\}\subset R_\epsilon+U_\epsilon )=\E G_\epsilon^*(0)G_\epsilon^*(y-x).
\end{align*}
On the other hand, for $0\le x\le y\le 1$,
\begin{align*}
\E[ G_\epsilon(x)G_\epsilon(y)] &=  (\pi_\epsilon([0,1])/d_\epsilon) \mathbb{P}'(\{x,y\}\subset R_\epsilon+V_\epsilon)=  d_\epsilon^{-1} \nu(\{x,y\}\subset R_\epsilon+U_\epsilon, U_\epsilon \le 1)\\& = d_\epsilon^{-1} \nu(\{x,y\}\subset R_\epsilon+U_\epsilon)=\E G_\epsilon^*(x)G_\epsilon^*(y).
\end{align*}
The reason for introducing $G^*_\epsilon$ in addition to $G_\epsilon$ is because we will apply the spectral representation (see \eqref{eq:spec rep G_eps*} below) of a stationary Gaussian process defined on $\bb{R}$. While  $G^*_\epsilon$ defined on $[0,\infty)$ can be extended to a stationary Gaussian process on $\bb{R}$ by shift, it is not the case for $G_\epsilon$     defined only on $[0,1]$.

Next, because $0\in R_\epsilon$, we have $\{0,x\}\in R_\epsilon+U_\epsilon$ if and only if $U_\epsilon=0$ and $x\in R_\epsilon$. So by (\ref{eq:pi_epsilon}) and \eqref{eq:R_eps U_eps},
\begin{align*}
\E [G_\epsilon^*(0) G_\epsilon^*(x)] =d_\epsilon^{-1}\pi_\epsilon(\{0\})  p_\epsilon(x) =p_\epsilon(x),
\end{align*}
and in particular
$\E G_\epsilon^*(0)^2=p_\epsilon(0)=1$.  Using the spectral representation  \cite[Theorem 7.54]{janson:1997:gaussian}, we have  for $\epsilon>0$ and $x\ge 0$ that
\begin{equation}\label{eq:spec rep G_eps*}
(G_\epsilon^*(x))_{x\in [0,\infty)}\EqFDD\left(\int_{\bb{R}}  e^{i\lambda x} \wh{W}_\epsilon(d\lambda)\right)_{x\in [0,\infty)},
\end{equation}
where $\wh{W}_\epsilon$ is a complex-valued Hermitian Gaussian random measure   with control measure $\mu_\epsilon$ satisfying
\begin{equation}\label{eq:p_eps cov}
p_{\epsilon}(x)=\int_\bb{R} e^{i\lambda x} \mu_\epsilon(d\lambda),\quad  x\ge 0.
\end{equation}   
In addition, using Gaussian moments, we have for some constant $c>0$ not depending on $x$ that   
\[
\E[G^*(x)-G^*(0)]^4=3(\E[G^*(x)-G^*(0)]^2)^2= 12(1-p_{\epsilon}(x))^2\le c x^2,\quad  x\ge 0,
\]
where the inequality follows from an examination of \eqref{eq:point hit R_epsilon}.
Hence $G_\epsilon$ and $G_\epsilon^*$ admit continuous versions 
 by Kolmogorov continuity theorem. We shall work with such versions of them when integrating along their time variables below.

Applying a stochastic Fubini Theorem \cite[Theorem 5.13.1]{peccati:taqqu:2011:wiener} to \eqref{eq:H_eps} (see also \eqref{eq:L_t^eps}), and using the relation between   Hermite polynomials and multiple Wiener-It\^o integrals  \cite[Theorem 7.52,  Equation (7.23) and Theorem 3.19]{janson:1997:gaussian},  \eqref{eq:G=G^*} and \eqref{eq:spec rep G_eps*}, we have  
\begin{align}\label{eq:H mult int}
\wt{Z}_{\epsilon}(t)& \ \, \overset{\text{a.s.}}{=}\frac{1}{\Gamma(\beta_p)}  \left( \frac{\epsilon}{e}\right)^{\beta_p-1} \left(\frac{d_\epsilon}{ \pi_\epsilon([0,1]) } \right)^{p/2} \int_0^t H_p\left(G_{\epsilon}(x)\right)  dx  \notag\\ & \EqFDD \frac{1}{\Gamma(\beta_p)}  \left( \frac{\epsilon}{e}\right)^{\beta_p-1} \left(\frac{d_\epsilon}{ \pi_\epsilon([0,1]) } \right)^{p/2} \int_0^t H_p\left(G_{\epsilon}^*(x)\right)  dx \notag\\
&  \ \, \EqFDD \frac{\left(d_\epsilon \left(\epsilon/e\right)^{\beta-1}\right)^p  }{\Gamma(\beta_p)\pi_\epsilon([0,1])^{p/2}}  \int _{\bb{R}^p}'' \frac{e^{i(\sum_{j=1}^p \lambda_j)t}-1}{i(\sum_{j=1}^p \lambda_j)}  \wt{W}_{\epsilon}(d\lambda_1)\ldots \wt{W}_\epsilon(d\lambda_p),  
\end{align}
where $\wt{W}_\epsilon:=d_\epsilon^{-1/2}\wh{W}_\epsilon$ has   control measure $\wt{\mu}_\epsilon:=d_\epsilon^{-1}\mu_\epsilon$. By   \eqref{eq:p_eps cov} and \eqref{eq:renewal density},
\begin{equation}\label{eq:fourier tran}
\int_{\bb{R}} e^{i\lambda x} \wt{\mu}_\epsilon(d\lambda)=d_\epsilon^{-1}p_{\epsilon}(|x|)=u_\epsilon(|x|).
\end{equation} 
 Note that  in view of \eqref{eq:f_epsilon} and \eqref{eq:d_eps},   as $\epsilon\rightarrow 0$,
 \begin{equation}\label{eq:u_eps}
 u_\epsilon(x)\rightarrow u_0(x)= \frac{ x^{\beta-1}}{\Gamma(\beta)}, \quad x> 0.
 \end{equation} 
 Define
\[
\wt {\mu}_0(d\lambda):=c_\beta |\lambda|^{-\beta} d\lambda, \quad \lambda\neq 0,
\]
where $c_\beta=\frac{2(1-\beta)}{\Gamma(\beta)}\int_0^\infty \sin(y)y^{\beta-2}dy$ is a constant ensuring  the relation 
\begin{equation}\label{eq:fourier trans u_0}
4\int_0^\infty \frac{\sin(ax)}{x} u_0(x)dx=     \wt{\mu}_0([-a,a]), \quad a>0,
\end{equation}
which can be obtained via a change of variable $ax=y$ in the integral above.

 We claim that as $\epsilon\rightarrow 0$, 
\begin{align}\label{eq:H_eps weak conv}
 (\wt{Z}_\epsilon(t))_{0 \le  t\le 1}\ConvFDD  \left(\frac{\Gamma(\beta)^p }{\Gamma(\beta_p) \Gamma(2-\beta)^{p/2}}\int _{\bb{R}^p}'' \frac{e^{i(\sum_{j=1}^p \lambda_j)t}-1}{i(\sum_{j=1}^p \lambda_j)}  \wh{W}_0(d\lambda_1)\ldots \wh{W}_0(d\lambda_p)\right)_{_{0  \le t\le 1}},
\end{align}
where $\ConvFDD$ stands for the convergence of the finite-dimensional distributions, $\wh{W}_\epsilon$ is a complex-valued Hermitian Gaussian random measure with control measure $\wt{\mu}_0$,
 The right-hand side of \eqref{eq:H_eps weak conv} is, up to a constant, the frequency-domain representation of  a  Hermite process \eqref{eq:freq domain} after noticing that $ \wh{W}_0(d\lambda) \EqD c_{\beta}^{-1/2} |\lambda|^{-\beta/2}\wh{W}(d\lambda)$. If \eqref{eq:H_eps weak conv} holds, then in view of \eqref{eq:L^2 conv}, the proof is   concluded.

To show \eqref{eq:H_eps weak conv}, in view of \eqref{eq:d_eps}, \eqref{eq:int h_y}, Cram\'er-Wold  device and \cite[Lemma 3]{dobrushin:major:1979:non} (see also   \cite[Proposition 5.3.6]{pipiras:2017:long}), it suffices to show as $\epsilon\rightarrow 0$, the vague convergence 
\begin{equation}\label{eq:vague conv}
\wt{\mu}_\epsilon(d\lambda)\overset{v}{\rightarrow} \wt {\mu}_0(d\lambda),
\end{equation}   
as well as  
\begin{equation}\label{eq:L^2 tight}
\lim_{A\rightarrow\infty}\limsup_{\epsilon\rightarrow 0}\int_{([-A,A]^p)^c} |k_{t}(\lambda_1,\ldots,\lambda_p)|^2 \wt{\mu}_\epsilon(d\lambda_1)\ldots \wt{\mu}_\epsilon(d\lambda_p)=0,
\end{equation}
where
\[
k_t(\lambda_1,\ldots,\lambda_p):=\int_{0}^t e^{is(\lambda_1+\ldots+\lambda_p)} ds= \frac{\exp(i(\sum_{j=1}^p \lambda_j)t)-1}{i(\sum_{j=1}^p \lambda_j)}, \quad t> 0, \quad \sum_{j=1}^p \lambda_j\neq 0.
\]

We first show \eqref{eq:vague conv}. By  an inversion   of the Fourier transform \eqref{eq:fourier tran}  \cite[Theorem 4:4]{lindgren:2012:stationary},  we have for any $a>0$,
\begin{align*}
\wt{\mu}_\epsilon([-a,a])= \lim_{A\rightarrow\infty}\int_{-A}^A \frac{e^{-iax}-e^{iax}}{-ix} u_\epsilon(x)dx=4\int_{0}^\infty \frac{\sin(ax)}{x} u_\epsilon(x)dx,
\end{align*}
where for the last expression above, its integrability  follows from the fact  (see \eqref{eq:renewal density},  \eqref{eq:d_eps} and \eqref{eq:bound u_epsilon})
\begin{equation}\label{eq:u_eps bound}
u_\epsilon(x)\le c x^{\beta-1},\quad x>0.
\end{equation} 
Note that in the inversion above, we have implicitly used the  continuity of $\wt{\mu}_\epsilon([-a,a])$ in $a$, which  can be verified using the dominated convergence theorem via the bound $\sup_{a\in [0,b]}|\sin(ax)|\le (bx)\wedge 1$ for any $x,b>0$.  Then
by  \eqref{eq:u_eps bound}, \eqref{eq:u_eps}, \eqref{eq:fourier trans u_0} and the dominated convergence theorem, we conclude  as $\epsilon\rightarrow 0$ 
\[
\wt{\mu}_\epsilon([-a,a])\rightarrow \wt{\mu}_0([-a,a]),\] and thus \eqref{eq:vague conv} holds.

To show \eqref{eq:L^2 tight}, define a measure on $\bb{R}^p$ as: 
\begin{equation*} 
\kappa_\epsilon(d\lambda_1,\ldots,d\lambda_p):=|k_t(\lambda_1,\ldots,\lambda_p)|^2 \wt{\mu}_\epsilon(d\lambda_1)\ldots \wt{\mu}_\epsilon(d\lambda_p).  
\end{equation*}

 We shall obtain \eqref{eq:L^2 tight} as a  tightness  condition  from the    weak convergence of   $\kappa_\epsilon$ as $\epsilon\rightarrow 0$.
Indeed, set $\mbf{1}=(1,\ldots,1)\in \bb{R}^p$  and let $\langle \cdot , \cdot \rangle$ denote the Euclidean inner product. By \eqref{eq:fourier tran}, \eqref{eq:u_eps bound}, Fubini and the dominated convergence theorem, we have as $\epsilon\rightarrow 0$
\begin{align*}
\int_{\bb{R}^p} e^{i \langle \mbf{\lambda} ,\mbf{x} \rangle } \kappa_\epsilon(d\mbf{\lambda})&= \int_{\bb{R}^p}  \wt{\mu}_\epsilon^p(d\mbf{\lambda}) e^{i \langle  \mbf{\lambda} , \mbf{x}\rangle } \int_{0}^t e^{is_1 \langle \mbf{\lambda},\mbf{1}\rangle} ds_1\int_{0}^t e^{-is_2 \langle\mbf{\lambda},  \mbf{1}\rangle} ds_2 
\\& =  \int_0^tds_1\int_0^tds_2  \prod_{j=1}^p    u_\epsilon(|x_j +s_1-s_2|) \\
&\rightarrow  \int_0^tds_1\int_0^tds_2  \prod_{j=1}^p    u_0(|x_j +s_1-s_2|)\\&=\frac{t^{\beta_p+1}}{\Gamma(\beta)^{p}}\int_{-1}^{1} (1-|y|)\prod_{j=1}^p |x_j/t+y|^{\beta-1}dy =: \phi(\mbf{x}),
\end{align*} 
  where the last  line is obtained by change of variables $s_1=t(y+w)$, $s_2=t w$ and integrating out the variable $w$.   Note that $\phi(\mbf{x})<\infty$ for any $\mbf{x}\in \bb{R}^p$ since $(\beta-1)p>-1$.     Furthermore,  the function $\phi(\mbf{x})$   is continuous    \cite[Lemma 1]{dobrushin:major:1979:non}  . 
So tightness \eqref{eq:L^2 tight} holds by L\'evy's continuity theorem.
Hence \eqref{eq:H_eps weak conv} is established and the proof is   complete.
 \end{proof}

 \bigskip
 \noindent \textbf{Acknowledgment}. The author would like to thank Takashi Owada and Yizao Wang for discussions which motivate this work, and also thank the anonymous referees for their helpful suggestions.
 
 \bigskip 
 
\noindent 
Shuyang Bai\\
Department of Statistics\\ 
University of Georgia\\
310 Herty Drive, \\
Athens, GA, 30602, USA. \\
{bsy9142@uga.edu}

\end{document}